\newtheorem{Theorem}{Theorem}
\newtheorem{Proposition}[Theorem]{Proposition}
\newtheorem{Corollary}[Theorem]{Corollary}
\theoremstyle{definition}
\theoremstyle{definition}
\newtheorem{Definition}[Theorem]{Definition}
\newtheorem{Remark}[Theorem]{Remark}
\newtheorem{Example}[Theorem]{Example}
\numberwithin{Theorem}{section} 
\numberwithin{equation}{section}
\newcommand{\Pic}{\operatorname{Pic}}
\newcommand{\Bl}{\operatorname{Bl}}
\newcommand{\Sing}{\operatorname{Sing}}
\renewcommand{\O}{{\mathcal O}}
\newcommand{\I}{{\mathcal I}}
\newcommand{\p}{{\mathbb P}}
\newcommand{\codim}{\operatorname{codim}}
\newcommand{\red}{\operatorname{red}}
\newcommand{\map}{\dasharrow}
\newcommand{\Supp}{\operatorname{Supp}}
\newcommand{\reg}{\operatorname{reg}}
\begin{document}

\begin{abstract}
The first part of this note contains a review of basic properties 
of the variety of lines contained in an embedded  projective 
variety  and passing through a general point. In particular we provide
a detailed proof that  for varieties defined by quadratic equations the base
locus of the projective second fundamental form at a general point coincides, as a scheme,
with the variety of lines.

The second part concerns the problem of extending embedded projective manifolds, 
using the geometry of the variety of lines. Some applications to the case of homogeneous
manifolds are included.

\end{abstract}
\subjclass[2000]{14MXX, 14NXX, 14J45, 14M07}

\title[Lines on  projective varieties ]{Lines on projective varieties and applications}
\author{Francesco Russo}
\address{Dipartimento di Matematica,
Universit\`a di Catania,
Viale A. Doria 6, 95125 Catania, Italy}
\email{frusso@dmi.unict.it}

\maketitle

\section*{Introduction}

 The {\it principle} that the Hilbert scheme of 
lines contained in a (smooth) projective
variety $X\subset\p^N$ and passing through a (general) point can inherit intrinsic and extrinsic geometrical properties 
of the variety, has emerged recently. This principle allowed
to attack some  problems in a {\it unified way}, provided 
non trivial connections between different theories and put some basic questions in a new light. A typical example is
the  Hartshorne Conjecture on complete intersections, see \cite{HC, DD} and also \cite{QEL1, QEL2}. The technique
of studying, or even reconstructing, $X$ from the {\it variety of minimal rational tangents} introduced 
 in  the work of Hwang, Mok and others  (a generalization of the Hilbert scheme of lines
passing through a point) was applied to the theory of Fano manifolds  (see e.g. \cite{HM,  HM2, HM3, Hwang, HK, FHw}).   On the other hand, Landsberg and others investigated some  possible characterizations of  special homogeneous
manifolds via the projective second fundamental form (see e.g. \cite{Lan3, Lan4, HY, LR}). 

The Hilbert schemes of lines through a general point
of many homogeneous varieties with notable geometrical properties  are also somehow {\it nested}, see Tables \eqref{hermitian} 
and \eqref{contact}, or {\it part of a matrioska}. For this class of varieties, or more generally for classes
where  the principle  holds,
one  
starts an induction process which sometimes stops after only a few steps, see e.g.
\cite[Theorem 2.8, Corollary 3.1 and 3.2]{QEL1} and also \cite{FHw}. 
An example of this kind is the following: if $X\subset\p^N$ is a $LQEL$-manifold of type $\delta\geq 3$, then the Hilbert scheme of lines $\mathcal L_{x,X}\subset\p^{n-1}$, $n=\dim(X)$, passing through a general point $x\in X$ 
is a $QEL$-manifold of type $\delta-2$, \cite[Theorem 2.3]{QEL1}. Then starting the induction with $X\subset\p^N$ a $LQEL$-manifold of type $\frac{n}{2}$, one deduces
immediately $n=2,4,8$ or 16, yielding as a consequence a quick proof that Severi varieties appear only in  these dimensions (see \cite[Corollary 3.2]{QEL1}, also for the definitions of $(L)QEL$-variety and of Severi variety, introduced by   Zak, see e.g. \cite{Zak}).

The Hilbert scheme of lines through a point is closely related to  the base locus of the (projective) second fundamental form, a classical tool used in  projective differential geometry and reconsidered 
in modern algebraic geometry by Griffiths and Harris (see  \cite{GH} and also \cite{IL}). In this theory  one tries to  reconstruct a (homogeneous) variety
from its second fundamental form (see e.g \cite{Lan3, Lan4, HY, LR})  by integrating local differential equations and  obtaining global results. We note that the base locus of the second fundamental form at a general point of a smooth variety
is typically not smooth, while this property is preserved by the Hilbert scheme of lines, see Proposition \ref{Yx}.
                              
An important class where the two previous objects coincide is that of {\it quadratic varieties},
that is varieties $X\subset\p^N$ scheme theoretically defined by quadratic equations.
All known {\it prime Fano  manifolds of high index},
 other than complete intersections (for example many homogeneous manifolds), are quadratic; moreover, they are embedded with {\it small codimension}.
For quadratic varieties  the Hilbert scheme of lines through a smooth  point is also quadratic,
see Proposition \ref{quadraticLx}. Moreover,  since it coincides with the base locus scheme of the second fundamental form, it may be scheme theoretically defined by at most $c=\codim(X)$
(quadratic) equations, see Corollary  \ref{quadraticformLx}. 
If $X\subset\p^N$ is smooth and $x\in X$ is general, 
then $\mathcal L_{x,X}\subset\p^{n-1}$ is also smooth, see Proposition \ref{Yx}. Thus for quadratic manifolds, if $\mathcal L_{x,X}$ is also irreducible,  a beautiful  {\it matrioska} naturally appears.
From this point of view, a quadratic manifold
$X\subset\p^N$ with $3n>2N$  is a complete intersection
{\it because} $\mathcal L_{x,X}\subset\p^{n-1}$ is a smooth irreducible non-degenerate
complete intersection, defined exactly by $c$ quadratic equations, so that  it has  
the {\it right dimension}, \cite[Theorems 4.8 and 2.4]{DD} and Remark \ref{ci}.

The aim of this note is twofold:  
In \S \ref{Lx} we  study in detail the intrinsic and extrinsic properties of the Hilbert
scheme of lines passing through a smooth point of an equidimensional connected variety
$X\subset\p^N$, providing  an almost  self contained  treatment. In \S 2 we illustrate another incarnation of the principle presented above by studying the problem of extending smooth varieties uniruled by lines as hyperplane sections of irreducible varieties.

First we describe the possible singularities of $\mathcal L_{x,X}$,
proving that a singular point of the Hilbert scheme of lines 
passing through a general point $x$ of an irreducible variety produces a line joining $x$ to a singular point of $X$, a stronger condition 
than the mere existence of a singular point on $X$, see Proposition \ref{Yx}. Then
we relate the equations defining $X\subset\p^N$ with those of 
$\mathcal L_{x,X}\subset\p((t_{x,X})^*)$, see \eqref{eqLxE}. This is applied to quadratic varieties showing that the Hilbert scheme of lines passing through a smooth point is 
a quadratic scheme, which coincides with the projectivized tangent cone at $x$ to the scheme $T_xX\cap X$,
see Proposition \ref{quadraticLx}. After introducing the base locus of the second fundamental
form of $X$ at $x$, $B_{x,X}\subset \p((t_{x,X})^*)$, we show that in general 
$\mathcal L_{x,X}\subseteq B_{x,X}$ as schemes with equality holding, as schemes, if  $X\subset\p^N$ is quadratic, see Corollary
\ref{quadraticformLx},  
 \cite[Theorem 2.4 and \S 4]
{DD} and also Proposition \ref{settheoretically} here. Then we recall some results about lines on prime Fano manifolds to illustrate further
how geometric properties of $X\subset\p^N$ are transferred  to $\mathcal L_{x,X}\subset\p((t_xX)^*)$, see Proposition \ref{Fano}
and Example \ref{excomp}.

In \S \ref{ext} we  consider the classical problem of 
the existence of projective extensions  $X\subset\p^{N+1}$ 
of a  subvariety $Y\subset \p^N\subset\p^{N+1}$. 
It is well known that some special manifolds cannot be hyperplane sections of smooth varieties
and that in some cases only the trivial extensions exist. These are given by cones over
$Y$ with vertex a point $p\in\p^{N+1}\setminus\p^N$ (see e.g. \cite{CSegre}, \cite{Scorza1}, \cite{Scorza2},
\cite{Terracini} and also \S \ref{ext} for precise definitions).
Recently the interest in the above problem 
 (and further  generalizations of it) was renewed.
 Complete references, many results and 
a lot of interesting connections with other
areas, such as  deformation theory of isolated singularities, can be found in the monograph \cite{Badescu}, especially relevant for this
problem being Chapters 1 and 5. One could also look at the survey \cite{Zakdual}. 

Many sufficient conditions for the non-existence of non-trivial extensions of smooth varieties are known.
These conditions are usually expressed, in the more general setting of extensions as ample divisors, by the vanishing of (infinitely many) cohomology groups
of the twisted tangent bundle of $Y$ (or of its normal bundle in $\p^N$). These results are general and concern a lot of applications, see {\it loc. cit.}, but even in the simplest cases the computation of these
cohomology groups can be quite complicated. In any case their geometrical meaning is not so obvious to the non-expert in the field.

Here we prove a simple geometrical sufficient condition for non-extendability, Theorem \ref{criterion}, for  smooth projective
complex varieties uniruled by lines. The simplest version
states that $Y\subset\p^N$ 
admits only trivial extensions $X\subset\p^{N+1}$ as soon as  $\mathcal L_{y,X}\subset  \p((t_yX)^*)$ admits no smooth extension (a weaker condition than the thesis!). Indeed, one easily shows in Proposition \ref{extlines}, via the results of \S 1, that also
$\mathcal L_{y, X}\subset\p((t_yX)^*)$ is a projective extension of  
$\mathcal L_{y,Y}\subset\p((t_yY)^*)$ for $y\in Y$ general. Then under the hypothesis of Theorem \ref{criterion}  one deduces  the existence of a line through $y$ and a singular point $p_y\in X$. Then $p_y=p$ does not vary with $y\in Y$ general since $X$ has at most a finite number of singular points so that  $X\subset\p^{N+1}$ is a cone of vertex $p$.
The range of applications of Theorem \ref{criterion}
is quite wide, see Corollary \ref{Segre}, \ref{exthermitian}, \ref{extcontact}, allowing us  to recover
some results previously obtained differently, see \cite{Badescu} and Remark \ref{dualhom}.

We were led to the analysis of the problem of extending smooth varieties by the desire of understanding geometrically why in some well-known examples the geometry of $Y\subset\p^N$ forces that every extension is trivial and
by the curiosity of explicitly constructing the cones extending $Y$.
Moreover, this approach reveals that  Scorza's  result about the non-extendability of $\p^a\times\p^b\subset \p^{ab+a+b}$ for $a+b\geq 3$, originally proved in \cite{Scorza2} and recovered later by many authors (see e.g. \cite{Badescu} and Corollary \ref{Segre} here), implies the non-extendability of a lot of homogeneous varieties via the
description of their Hilbert scheme of lines. 
From this perspective the  Pl\" ucker embedding of $Y=\mathbb G(r,m)$, with $1\leq r<m-1$ and for $r=1$
with $m\geq 4$, admits only trivial extensions because  $\mathcal L_{y,Y}=\p^r\times\p^{m-r-1}$  admits only trivial extensions (see \cite{Fiore} for an ad-hoc proof following Scorza's approach).  
Besides the  applications contained in  Corollary  \ref{Segre} and \ref{exthermitian}, we 
also show that our analysis can be used to provide a direct proof that 
$\nu_2(\p^n)\subset\p^{\frac{n(n+3)}{2}}$ admits only trivial extensions, see Proposition \ref{Veronese}, a well-known classical fact originally proved by Scorza in \cite{Scorza1} and later obtained differently by many authors.

{\bf Acknowledgements}. 
I am indebted to Paltin Ionescu  for his  useful remarks and comments leading to an improvement
of the exposition and especially for various discussions on these subjects.  Giovanni Staglian\` o read carefully the text
and made useful comments on a preliminary version.
A special thank  to  Prof. Markus Brodmann for 
his invitation  to give a talk at the Oberseminar at Z\" urich University in May 2010,  for his kind hospitality and for his interest in my work. On that  occasion I began to organize the material contained in \S 1.

\section{Geometry of (the  Hilbert scheme of) lines contained in a variety and passing through a (general) point}\label{Lx}

\subsection{Notation, definitions and preliminary results}\label{prel}
Let $X\subset\p^N$ be a (non-degenerate) connected  equidimensional projective variety of dimension $n\geq 1$,
defined over a fixed algebraically closed field of characteristic zero, which
from now on will be simply called a {\it projective variety}. If $X$ is smooth and irreducible, we shall call $X$ a {\it manifold}.
Let $X_{\reg}=X\setminus\Sing(X)$ be the smooth locus of $X$. Let $t_xX$ denote {\it the affine tangent space to $X$ at $x$}, let $T_xX\subset\p^N$ denote
{\it the projective tangent space to $X$ at $x$ of $X\subset\p^N$} and for an arbitrary scheme
$Z$ and for a closed point $z\in Z$ let $C_zZ$ denote {\it the affine tangent cone to $Z$ at $z$}. Let $\mathcal L_{x,X}$
denote  the Hilbert scheme of lines contained in $X$ and passing through the point $x\in X$. For  a line $L\subset X$
passing through $x$, we let $[L]\in \mathcal L_{x,X}$ be the corresponding point.

Let $\pi_x: \mathcal{H}_x \to \mathcal{L}_{x,X}$ denote the universal family 
and let $\phi_x: \mathcal{H}_x \to X$ be the tautological morphism. 
From now on we shall always suppose that $x\in X_{\reg}$.
Note that $\pi_x$ admits a section $s_x:\mathcal L_{x,X}\to {\mathcal E}_x\subset\mathcal H_x$, which is contracted by $\phi_x$ to the point $x$.  Consider the blowing-up $\sigma_x: \Bl_xX\to X$ of $X$ at $x$. For every $[L]\in\mathcal L_{x,X}$ the line $L=\phi_x(\pi_x^{-1}([L]))$ is smooth at $x$ so that \cite[Lemma 4.3]{IN} and the universal property of the blowing-up ensure the existence of  a morphism $\psi_x : {\mathcal H}_x \to \Bl_xX$ such that $\sigma_x \circ \psi_x = \phi_x$. So we have the following diagram
\begin{equation}\label{joindiagram1}\raisebox{.7cm}{\xymatrix{
&\mathcal{H}_x  \ar[d]_{\pi_x} \ar[dr]^{\phi_x}\ar[r]^{\psi_x}&\Bl_xX\ar[d]^{\sigma_x}\\
&\mathcal{L}_{x,X}&X.
}}
\end{equation}
In particular, $\psi_x$ maps the section ${\mathcal E}_x$   
to $E_x$, the exceptional divisor of $\sigma_x$. 
Let $\widetilde \psi _x : {\mathcal E}_x \to E_x$ be the restriction of $\psi_x$ 
to $\mathcal E_x$. 
We can define the  morphism 
\begin{equation}\label{taux}
\tau_x=\tau_{x,X}=\widetilde \psi_x\circ s_x:\mathcal L_{x,X}\to \p((t_xX)^*)=E_x=\p^{n-1},
\end{equation}
which associates to each line $[L]\in \mathcal L_{x,X}$ the corresponding tangent direction through
$x$, i.e. $\tau_x([L])=\p((t_xL)^*)$. The morphism  $\tau_x$ is clearly injective and we claim that $\tau_x$ is a closed immersion.
Indeed, by taking in the previous construction  $X=\p^N$ the corresponding morphism $\tau_{x,\p^N}:\mathcal L_{x,\p^N}\to \p((t_x\p^N)^*)=\p^{N-1}$ is an isomorphism between $\mathcal L_{x,\p^N}$ and the exceptional divisor of $\Bl_x\p^N$. By definition
the inclusion $X\subset\p^N$ induces a closed embedding $i_x:\mathcal L_{x,X}\to\mathcal L_{x,\p^N}$. If $j_x:\p((t_xX)^*)\to
\p((t_x\p^N)^*)$ is the natural closed embedding, then we have the following commutative diagram
\begin{equation}\label{diagram2}{\xymatrix{
&\mathcal{L}_{x,X}  \ar[d]_{i_x} \ar[r]^{\tau_{x,X}}&\p((t_xX)^*)\ar[d]^{j_x}\\
&\mathcal{L}_{x,\p^N}\ar[r]^{\tau_{x,\p^N}}&\p((t_x\p^N)^*),
}}
\end{equation}
proving the claim.

For $x\in X_{\reg}$ such that $\mathcal L_{x,X}\neq\emptyset$, we shall always identify $\mathcal L_{x,X}$ with $\tau_x(\mathcal L_{x,X})$ and we shall naturally  consider $\mathcal L_{x,X}$ as a subscheme of $\p^{n-1}=\p((t_xX)^*)$. 
 We denote by $\mathcal C_x$ the scheme theoretic image of $\mathcal H_x$, that is $\phi_x(\mathcal H_x)=\mathcal C_x\subset X$. Via \eqref{joindiagram1}
we deduce the following relation:
\begin{equation}\label{tangentconeCx}
\p(C_x(\mathcal C_x))=\mathcal L_{x,X},
\end{equation}
as subschemes of $\p((t_xX)^*)$, where $\p(C_x(\mathcal C_x))$
 is the {\it projectivized tangent cone to $\mathcal C_x$ at $x$}, see \cite[II,\S 3]{Mumford}.

\subsection{Singularities of $\mathcal L_{x,X}$}\label{sing}
We begin by studying the  intrinsic geometry of $\mathcal L_{x,X}\subset\p^{n-1}$. When it is clear from the context which variety $X\subset\p^N$ we are considering we shall write $\mathcal L_x$ instead of $\mathcal L_{x,X}$.

The normal bundle $N_{L/X}$ is locally free being a subsheaf of the locally free sheaf $N_{L/\p^N}\simeq\O_{\p^1}(1)^{N-1}$.
If $L\cap X_{\reg}\neq\emptyset$, then $N_{L/X}$ is locally free of rank $n-1$ and more precisely
\begin{equation}\label{ai1}
N_{L/X}\simeq\bigoplus_{i=1}^{n-1} \O_{\p^1}(a_i),
\end{equation}
with $a_i\leq 1$ since  $N_{L/X}\subset N_{L/\p^N}$.

 If $N_{L/X}$ is  
also generated by global sections, then
\begin{equation}\label{split}
N_{L/X}\simeq \O_{\p^1}(1)^{s(L,X)}\oplus\O_{\p^1}^{n-1-s(L,X)}.
\end{equation}
Therefore if $N_{L/X}$ is generated by global sections, then $\mathcal L_x$ is unobstructed at $[L]$, that is $h^1(N_{L/X}(-1))=0$,
$\mathcal L_x$ is smooth at $[L]$ and $\dim_{[L]}(\mathcal L_x)=h^0(N_{L/X}(-1))=s(L,X)$, 
where $s(L,X)\geq 0$ is the integer  defined in \eqref{split}.

For $x\in X_{\reg}$,  let

$$S_x=S_{x,X}=\{[L]\in \mathcal L_x\text{ such that } L\cap \Sing(X)\neq \emptyset\;\}\subseteq \mathcal L_x.$$

Then $S_{x,X}$ has a natural scheme structure and the previous inclusion holds at the scheme theoretic level.
If $X$ is smooth, then  $S_{x,X}=\emptyset$. Moreover, if $L\subset X$ is a line passing through $x\in X_{\reg}$, clearly
 $[L]\not\in S_{x,X}$ if and only if $L\subset X_{\reg}$.
\medskip

We now prove that  a singular point of $\mathcal L_x$
produces a line passing through $x$ and through a singular point of $X$, a stronger condition than
the mere existence of a singular point on $X$. These results are  well known to experts, at least for manifolds,  see \cite[Proposition 1.5]{Hwang} and also  
 \cite[Proposition 2.2]{QEL1}.  In \cite{DG}, the singularities of the Hilbert scheme of lines contained in a projective variety are related to some geometrical
 properties of the variety.
 \medskip

\begin{Proposition}\label{Yx}
Let notation be as above and let $X\subset\p^N$ be an irreducible projective variety of dimension $n\geq 2$. Then for $x\in X_{\reg}$ general:
\begin{enumerate}
\item  $\mathcal L_x\subset\p^{n-1}$ is smooth outside $S_{x,X}$, that is $\Sing(\mathcal L_x)\subseteq S_x.$
In particular if $X\subset\p^N$ is smooth and if $x\in X$ is general, then $\mathcal L_x\subset\p^{n-1}$
is a smooth variety.
\medskip

\item If $\mathcal L_x^j$, $j=1,\ldots,m$, are the irreducible components
of $\mathcal L_x$ and if 
 \[\dim(\mathcal L_x^l)+\dim(\mathcal L_x^p)\geq n-1 \quad \mbox{for some } l\neq
p,\]
then  $\mathcal L_x$ is singular, $X$ is singular and there exists a line $[L]\in \mathcal L_x$ such
that $L\cap \Sing(X)\neq\emptyset$. 
\end{enumerate}
\end{Proposition}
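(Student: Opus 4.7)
For part (1), my plan is to reduce smoothness of $\mathcal{L}_x$ at $[L]$ to global generation of $N_{L/X}$. For $[L]\in\mathcal{L}_x$ with $L\subset X_{\reg}$ (equivalently $[L]\notin S_{x,X}$), the normal bundle $N_{L/X}$ is locally free of rank $n-1$ with splitting \eqref{ai1}, all $a_i\le 1$. Since $\mathcal{L}_x$ parameterizes lines in $X$ through $x$, its Zariski tangent space at $[L]$ is the space of first-order deformations of $L$ in $X$ preserving $x$, namely $H^0(L,N_{L/X}(-1))$, and the obstruction space is $H^1(L,N_{L/X}(-1))=\bigoplus_i H^1(\p^1,\O(a_i-1))$. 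This obstruction vanishes iff $a_i\ge 0$ for every $i$, i.e.\ iff $N_{L/X}$ is globally generated, which is exactly the splitting type \eqref{split}; in that case $\mathcal{L}_x$ is smooth at $[L]$ of dimension $s(L,X)$. Hence (1) reduces to proving that for $x$ general and any $[L]\in\mathcal{L}_x\setminus S_{x,X}$, $N_{L/X}$ is globally generated.

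For this, let $\mathcal{L}^{(j)}$ range over the irreducible components of the Hilbert scheme of lines in $X$, with universal families $\mathcal{H}^{(j)}\to\mathcal{L}^{(j)}$ and evaluation maps $\Phi^{(j)}\colon\mathcal{H}^{(j)}\to X^{(j)}\subseteq X$. Choosing $x\in X_{\reg}$ general, $x$ avoids every proper $X^{(k)}\subsetneq X$, so any $[L]\in\mathcal{L}_x$ lies in a component with $X^{(j)}=X$ and $\Phi^{(j)}$ dominant. By generic smoothness in characteristic zero, there is a dense open $U^{(j)}\subseteq X_{\reg}$ over which $\Phi^{(j)}$ is smooth, and we take $x\in U^{(j)}$ for every relevant $j$. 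For any $y\in L\cap U^{(j)}$ the scheme $\mathcal{H}^{(j)}$ is smooth at $([L],y)$, the tangent space fits into the canonical sequence $0\to t_yL\to t_{([L],y)}\mathcal{H}^{(j)}\to H^0(L,N_{L/X})\to 0$, and $d\Phi^{(j)}_{([L],y)}$ surjects onto $t_yX$; dividing out by $t_yL\hookrightarrow t_yX$ translates this surjection into surjectivity of the evaluation $H^0(L,N_{L/X})\to N_{L/X}\otimes k(y)$. Since $L\cap U^{(j)}$ is cofinite in $L\cong\p^1$, the evaluation of $N_{L/X}$ is surjective at infinitely many points of $L$, forcing $N_{L/X}$ to be globally generated. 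The case of $X$ smooth is immediate since then $S_{x,X}=\emptyset$.

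Part (2) follows from (1) together with the classical fact that two irreducible closed subvarieties of $\p^{n-1}$ with dimensions summing to at least $n-1$ must meet. Indeed, the affine cones in $\mathbb{A}^n$ are irreducible and pass through the origin with dimensions $\dim(\mathcal{L}_x^l)+1$ and $\dim(\mathcal{L}_x^p)+1$, so every component of their scheme-theoretic intersection has dimension at least $\dim(\mathcal{L}_x^l)+\dim(\mathcal{L}_x^p)+2-n\ge 1$ and therefore contains nonzero vectors, giving $\mathcal{L}_x^l\cap\mathcal{L}_x^p\ne\emptyset$. Any $[L]$ in this intersection is a singular point of $\mathcal{L}_x$, because its local ring has two distinct minimal primes, one from each component. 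By (1), $\Sing(\mathcal{L}_x)\subseteq S_{x,X}$, whence $L\cap\Sing(X)\ne\emptyset$; in particular $X$ is singular.

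The main technical point requiring care is the identification in part (1) of the differential of $\Phi^{(j)}$ with the evaluation of $N_{L/X}$, via the canonical tangent decomposition of the universal family; once that is in place, generic smoothness of $\Phi^{(j)}$ directly yields cofinite surjectivity of evaluation on $L$, and part (2) becomes formal.
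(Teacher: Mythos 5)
Your overall strategy coincides with the paper's: reduce smoothness of $\mathcal L_x$ at $[L]\notin S_{x,X}$ to global generation of $N_{L/X}$ via the vanishing of $H^1(N_{L/X}(-1))$, and deduce part (2) from part (1) by intersecting two components inside $\p^{n-1}$. Those reductions, and all of your part (2), are correct and agree with the paper, which however simply quotes \cite[Proposition 4.14]{Debarre} for the key input that for $x$ general every line $L\subset X_{\reg}$ through $x$ has $N_{L/X}$ generated by global sections. The problem lies in your attempt to prove that input directly.

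The gap is the application of generic smoothness to $\Phi^{(j)}\colon\mathcal H^{(j)}\to X$ to produce a dense open $U^{(j)}\subseteq X_{\reg}$ over which $\Phi^{(j)}$ is a smooth morphism, followed by the conclusion that $\mathcal H^{(j)}$ is smooth at $([L],y)$ for $y\in L\cap U^{(j)}$. Generic smoothness in characteristic zero requires the source to be smooth (or one must first restrict to the smooth locus of the reduction); for a morphism with singular source the conclusion can fail over every open subset of the target --- e.g.\ the projection $C\times X\to X$ with $C$ a cuspidal curve is nowhere smooth. Here $\mathcal H^{(j)}$ is a $\p^1$-bundle over the Hilbert scheme component $\mathcal L^{(j)}$, which may be singular or non-reduced, and $\Sing(\mathcal H^{(j)})$ is the full preimage of $\Sing(\mathcal L^{(j)})$; a priori the lines parametrized by $\Sing(\mathcal L^{(j)})$ could sweep out a dense subset of $X$, in which case no open set $U^{(j)}$ with the property you use exists. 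Ruling this scenario out is essentially the statement being proved, so the step is circular as written. The standard repair (and the actual proof of the result the paper cites) runs by contradiction: the locus of lines with non--globally generated normal bundle is closed in each component by semicontinuity of $h^1(N_{L/X}(-1))$; if an irreducible component $W$ of this locus had its lines dominating $X$, one would apply generic smoothness to the evaluation restricted to $\p^1$ times the smooth locus of $W_{\red}$ --- now a smooth source --- and your own differential computation would show that a general member of $W$ has surjective evaluation $H^0(N_{L/X})\to N_{L/X}\otimes k(y)$ at some point $y$, hence globally generated normal bundle, a contradiction. With that lemma established (or simply cited, as the paper does), the rest of your argument goes through.
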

\begin{proof}
There exists an open dense subset $U\subseteq X_{\reg}$ such that for
every line $L\subset X_{\reg}$ such that $L\cap U\neq \emptyset$
the normal bundle $N_{L/X}$ is generated by global sections,
see for example \cite[Proposition 4.14]{Debarre}. Combining this result
with the above discussion, we deduce that for every $x\in U$ the variety
$\mathcal L_x\subset\p^{n-1}$ is smooth outside $S_x$, proving the first assertion.

The condition on
the dimensions of two irreducible components of $\mathcal L_x$ in (2) ensures that these
components  have to  intersect in $\p^{n-1}$. A point of intersection is a singular
point of $\mathcal L_x\subset\p^{n-1}$. This forces $X$ to be singular by the first part and 
also the existence of
a line $[L]\in S_x$, which by definition cuts $\Sing(X)$.
\end{proof}

\subsection{Equations for $\mathcal L_{x,X}\subset\p((t_xX)^*)$}\label{equations}
\medskip

We now  follow  and expand the treatment outlined in \cite[Theorem 2.4]{DD} by looking at the equations defining $\mathcal L_x\subset\p^{n-1}$ for $x\in X_{\reg}$.

Let $$    X=V(f_1,\ldots, f_m)\subset\p^{N}\hskip 3cm (\ast),$$ 
be a projective equidimensional connected variety, not necessarily irreducible, let $x\in X_{\reg}$, let $n=\dim(X)$ and let $c=\codim(X)=N-n$. Thus we are assuming that 
$X\subset\p^N$ is  scheme theoretically the intersection of $m\geq 1$
 hypersurfaces of degrees $d_1\geq d_2\geq\ldots\geq d_m\geq 2.$ Moreover it is implicitly assumed that $m$ is minimal, i.e. none of the hypersurfaces contains the intersection of the others. 
 Define, following \cite{DD}, the integer  
 
 $$d:=\min\{\sum_{i=1}^{c}(d_i-1)\text{ for expressions  $(\ast)$ as above}\}\geq c.$$

  With these definitions  $X\subset\p^N$ (or more generally a scheme $Z\subset\p^N$) is called {\it quadratic} if it is scheme theoretically an intersection of quadrics, which means that  we can assume $d_1=2$. In particular  $X\subset\p^N$ is quadratic if and only if $d=c$.
\medskip

We can choose homogeneous coordinates $(x_0:\ldots:x_N)$ on $\p^N$ such that  $x=(1:0:\ldots:0),$ $T_xX=V(x_{n+1},\ldots, x_N).$ Let $\mathbb A^N=\p^N\setminus V(x_0)$ with affine coordinates $(y_1,\ldots, y_N)$, that is   $y_l=\frac{x_l}{x_0}$ for every $l=1,\ldots, N$. Let $\widetilde \p^N=\Bl_x\p^N$ with exceptional divisor $E'\simeq\p((t_x\p^N)^*)=\p^{N-1}$ and let $\widetilde X=\Bl_xX$ with exceptional
divisor $E=\p((t_xX)^*)=\p^{n-1}$. Looking at the graph of the projection from $x$ onto $V(x_0)$ we can naturally
identify the projectivization of $\mathbb A^N\setminus \mathbf 0=\mathbb A^N\setminus x$ with $E'$ and with the projective hyperplane $V(x_0)=\p^{N-1}$.

Let $f_i=f_i^1+f_i^2+\cdots+f_i^{d_i}$, with $f_i^j$ homogeneous of degree $j$ in the variables $(y_1,\ldots, y_N)$. So
 $f_1^1=\ldots=f_m^1=0$ are the equations of $t_xX=T_xX\cap\mathbb A^N\subset\mathbb A^N$,
which reduce to $y_{n+1}=\ldots=y_N=0$ by the previous choice of coordinates, yielding 
$$V(f_1^1,\cdots, f_m^1)=\p((t_xX)^*)\subset \p(( t_x\p^N)^*)=\p^{N-1}.$$

With the previous identifications  $\mathcal L_{x,\p^N}=E'=\p^{N-1}=\p((t_x\p^N)^*)$. We now write a set of equations defining  $\mathcal L_x\subset E\subset E'$ as a subscheme of $E'$ and of $E$. By definition  $\mathbf y=(y_1:\ldots:y_n)$ are homogeneous coordinates on 
$E\subset E'$. For every $j=2,\ldots, m$ and for every $i=1,\ldots, m$, let 
$$\widetilde f_i^j(\mathbf y)=f_i^j(y_1,\ldots,y_n,0,0,\ldots,0,0).$$

Then we have that $\mathcal L_x\subset E'$ is  the scheme $$
V(f_1^1,f_1^2,\cdots,f_1^{d_1}, \cdots, f_m^1,f_m^2,\cdots,f_m^{d_m})\subset E',$$ while $\mathcal L_x\subset E$ is the scheme
\begin{equation}\label{eqLxE}
V(\widetilde f_1^2,\cdots,\widetilde f_1^{d_1}, \cdots, \widetilde f_m^2,\cdots,\widetilde f_m^{d_m}),
\end{equation}
so that  it is scheme theoretically defined by at most $\sum_{i=1}^m(d_i-1)$ equations.

The equations of $T_xX\cap X\cap \mathbb A^N=t_xX\cap X\cap \mathbb A^N$, as a subscheme of $\mathbb A^N$, are 
$$V(f_1^1, \ldots, f_m^1, f_1^1+f_1^2+\cdots+f_1^{d_1},\ldots, f_m^1+f_m^2+\cdots+f_m^{d_m})=$$
\begin{equation}\label{eqTxAN}
V(f_1^1, \ldots, f_m^1, f_1^2+\cdots+f_1^{d_1},\ldots, f_m^2+\cdots+f_m^{d_m})\subset\mathbb A^N.
\end{equation}
Thus the equations of $T_xX\cap X\cap\mathbb A^N=t_xX\cap X\cap\mathbb A^N$ as a subscheme of $t_x(X\cap \mathbb A^N)=t_xX$ are
\begin{equation}\label{eqTxtx}
V(\widetilde  f_1^2+\cdots+\widetilde f_1^{d_1},\ldots, \widetilde f_m^2+\cdots+\widetilde f_m^{d_m})\subset t_xX=\mathbb A^n.
\end{equation}

Let $I=\langle \widetilde  f_1^2+\cdots+\widetilde f_1^{d_1},\ldots, \widetilde f_m^2+\cdots+\widetilde f_m^{d_m}\rangle\subset\mathbb C[y_1,\ldots,y_n]=S$ and let $I^*$ be the ideal generated by the {\it initial forms} of elements of $I$. Remark  that if $I$ is homogeneous and generated by forms of the same degree, then clearly $I=I^*$. Then the affine tangent cone to $T_xX\cap X$ at $x$ is $C_x(T_xX\cap X)={\rm Spec}(\frac{S}{I^*})$ so that 
\begin{equation}\label{exp1}
\p(C_x(T_xX\cap X))={\rm Proj} (\frac{S}{I^*}),
\end{equation}
see \cite[III, \S\,3]{Mumford}. 

 Let $J\subset S$ be the homogeneous ideal generated by the polynomials in 
\eqref{eqLxE} defining $\mathcal L_{x,X}$ scheme theoretically, that is 
$\mathcal L_{x,X}={\rm Proj}(\frac{S}{J})\subset\p((t_xX)^*)$. Clearly $I^*\subseteq J$, yielding the  closed embedding of schemes 
\begin{equation}\label{inclLxBx}
\mathcal L_{x,X}\subseteq \p(C_x(T_xX\cap X)).
\end{equation}

 If $X\subset\p^N$ is quadratic,  then  $I=I^*=J$. In conclusion we have proved the following results.
\medskip

\begin{Proposition}\label{quadraticLx} Let $X\subset\p^N$ be a (non-degenerate)  projective variety, let $x\in X_{\reg}$ be a  point and let notation be as above.
If $X\subset\p^N$ is quadratic, then

\begin{equation}\label{fund0}
T_xX\cap X\cap\mathbb A^N=C_x(T_xX\cap X)\subset t_xX
\end{equation}
and 

\begin{equation}\label{fund}
\mathcal L_{x,X}=\p(C_x(T_xX\cap X))\subset\p((t_xX)^*).
\end{equation}

In particular if  $X\subset\p^N$ is quadratic, then  the scheme  $\mathcal L_{x,X}\subset\p((t_xX)^*)$
is quadratic.
\end{Proposition}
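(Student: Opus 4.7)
The plan is to directly exploit the explicit equations \eqref{eqLxE} and \eqref{eqTxtx} under the quadratic hypothesis $d_1 = \cdots = d_m = 2$, which forces every higher-degree part $f_i^j$ with $j \geq 3$ to vanish. Thus each $f_i = f_i^1 + f_i^2$ with $f_i^1$ linear and $f_i^2$ quadratic, and all the ``tail'' sums appearing in \eqref{eqTxtx} collapse to the single term $\widetilde{f}_i^2$.

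Concretely, I would compare the three ideals in play in $S = \mathbb{C}[y_1,\ldots,y_n]$. Under the quadratic assumption, \eqref{eqTxtx} becomes
\begin{equation*}
T_xX \cap X \cap \mathbb{A}^N \;=\; V(\widetilde{f}_1^2,\ldots,\widetilde{f}_m^2) \;\subset\; t_xX,
\end{equation*}
so $I = \langle \widetilde{f}_1^2,\ldots,\widetilde{f}_m^2\rangle$ is already homogeneous, generated by forms all of the same degree two. Consequently $I = I^*$, and by the definition recalled around \eqref{exp1} this immediately gives $T_xX \cap X \cap \mathbb{A}^N = \operatorname{Spec}(S/I^*) = C_x(T_xX \cap X)$, which is assertion \eqref{fund0}. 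On the projectivized side, \eqref{eqLxE} under $d_i = 2$ reads $\mathcal{L}_{x,X} = \operatorname{Proj}(S/J)$ with $J = \langle \widetilde{f}_1^2,\ldots,\widetilde{f}_m^2\rangle$, so $J = I = I^*$. The general inclusion \eqref{inclLxBx} therefore becomes an equality of schemes, proving \eqref{fund}.

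The final ``in particular'' statement then follows at once: since $J$ is generated by quadratic forms, $\mathcal{L}_{x,X} \subset \p((t_xX)^*)$ is cut out scheme-theoretically by quadrics, i.e.\ it is quadratic. The only potential obstacle is verifying that the passage from \eqref{eqTxtx} to the affine tangent cone genuinely uses $I = I^*$ rather than just set-theoretic agreement; but this is precisely why the quadratic hypothesis is essential, since for higher-degree defining equations the ideal $I$ would contain non-homogeneous generators whose initial forms could fail to generate $I^*$, breaking the chain $I = I^* = J$. Once the equality of ideals is established, the rest is formal.
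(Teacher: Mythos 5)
Your proof is correct and is essentially the paper's own argument: the paper also deduces the Proposition directly from the equations \eqref{eqLxE} and \eqref{eqTxtx} by observing that in the quadratic case all tails collapse to the single quadratic terms $\widetilde f_i^2$, so that $I=I^*=J$ and the general inclusion \eqref{inclLxBx} becomes an equality of schemes. Nothing is missing.
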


\subsection{$\mathcal C_x$ versus $T_xX\cap X$ for a quadratic variety}\label{conesex}

The closed embedding \eqref{inclLxBx} holds at the  scheme theoretic level. 
If $\mathcal L_{x,X}$ were reduced, or better smooth, it would be enough to prove that there exists an inclusion as sets. 
Since $x\in X_{\reg}$ was arbitrary we cannot control
 a priori the structure of $\mathcal L_{x,X}$ even if $X\subset\p^N$ is a manifold.
 Recall that by Proposition \ref{Yx}   $\mathcal L_{x,X}$ is smooth as soon as $X$ is a manifold and
$x\in X$ is a general point.

{\it From now on we shall suppose $X\subset\p^N$ quadratic}. Then
\begin{enumerate}
\item $(\mathcal C_x)_{\red}=(T_xX\cap X)_{\red}$; 

\item if $X\subset\p^N$ is a manifold and if
$x\in X$ is a general point, then $\mathcal C_x=(T_xX\cap X)_{\red}$;
\item the strict transforms of $\mathcal C_x$ and of $T_xX\cap X$ on $\Bl_xX$ cut the exceptional
divisor $E=\p((t_xX)^*)$ of $\Bl_xX$ in the same scheme $\mathcal L_{x,X}$ (see \eqref{tangentconeCx} and \eqref{fund});
\item  if $x\in X$ is a general point on
a quadratic manifold $X\subset\p^N$ and if $I^*$ is saturated, then $T_xX\cap X$ is reduced in a neighborhood of $x$ so
that it coincides with $\mathcal C_x$ in a neighborhood of $x$. Indeed since  $T_xX\cap X\cap \mathbb A^n={\rm Spec}(\frac{S}{I})$,
with $I=I^*=J$ homogeneous and saturated, it follows that  $T_xX\cap X$ is reduced at $x$; therefore it is  reduced also in a neighborhood of $x$.
\end{enumerate}

Already for quadratic manifolds there exist many important differences between $\p(C_x(T_xX\cap X))\subset\p((t_xX)^*)$
and $C_x(T_xX\cap X)=T_xX\cap X\cap \mathbb A^N\subset t_xX$ and also between $T_xX\cap X$ and the cone $\mathcal C_x\subseteq T_xX\cap X$. We shall discuss some  examples in order to  analyze closer these  important schemes containing a lot of
geometrical information.
 
\begin{Example} ({\it $T_xX\cap X$ non-reduced only at $x$})\label{exscrolls}
Remark that  $t_x(T_xX\cap X)=t_xX$ so that
$\langle C_x(T_xX\cap X)\rangle=t_xX$, while in some cases $\p(C_x(T_xX\cap X))$
is degenerate in $\p((t_xX)^*)$.  Consider a rational normal scroll  $X\subset\p^N$, different from the Segre varieties $\p^1\times\p^{n-1}$, $n\geq 2$, and a general point $x\in X$. It is well known that $X\subset\p^N$ is quadratic so that $\mathcal L_{x,X}=\p(C_x(T_xX\cap X))\subset\p((t_xX)^*)$ by \eqref{inclLxBx}. On the other hand, if $\p^{n-1}_x$ is the unique $\p^{n-1}$ of the ruling passing through $x\in X$, it is easy to see, letting notation as above, that in this case
$$\mathcal L_{x,X}=\p(\p^{n-1}_x\cap\mathbb A^n)=\p^{n-2}\subset\p((t_xX)^*)=\p^{n-1}.$$
This is possible because in this example $T_xX\cap X$ and $C_x(T_xX\cap X)$ are not reduced at $x$. Indeed, the point $x\in C_x(T_xX\cap X)$  corresponds to the irrelevant ideal of $S$.  $I^*$ is not saturated, because the equation defining the hyperplane $\mathcal L_{x,X}$ belongs to the saturation of $I^*$, but is not in $I^*$ ($I^*$ is generated
by quadratic polynomials!). 
\end{Example}
\medskip

In the case of rational normal scrolls discussed in Example \ref{exscrolls}  we saw that $T_xX\cap X\setminus x=\mathcal C_x\setminus x$ as schemes, the affine tangent cones are different affine schemes, but the projectivized  tangent cones  coincide.

By choosing suitable quadrics $Q_1,\ldots, Q_c$
we shall see in subsection \ref{implicit}  that the complete intersection $Y=Q_1\cap\ldots\cap Q_c$ coincides locally with $X$ around $x$. Thus $T_xY\cap Y$ and $T_xX\cap X$ coincide locally around $x$.
In particular the intersection of their strict transform on $\Bl_xX$ with the exceptional
divisor is the same, so that $\mathcal L_{x,X}=\mathcal L_{x,Y}$ and the last scheme
can be defined scheme theoretically by $r\leq c$ linearly independent quadrics by \eqref{eqLxE}.

In any case the double nature of $T_xX\cap X$ as a subscheme of $T_xX$ and $X$
plays a central role for its infinitesimal properties at $x$, measured exactly by
$\p(C_x(T_xX\cap X))\subset\p((t_xX)^*)$.

It is useful to think of $\p(C_x(T_xX\cap X))\subset\p((t_xX)^*)$ as being the base locus scheme of the restriction to the exceptional divisor over $x$ of the projection of $X$ from $T_xX$,  as we shall 
do in the next section. We shall provide  in this way another reason why $\mathcal L_{x,X}$
can be defined scheme theoretically by at most $c$ quadratic equations for an arbitrary point
$x\in X_{\reg}$.

\subsection{Tangential projection and second fundamental form}\label{second}

There are several possible equivalent
definitions of the projective second fundamental form
$|II_{x,X}|\subseteq\p(S^2(t_xX))$ of a connected equidimensional
projective variety $X\subset\p^N$ at  $x\in X_{\reg}$, see
for example \cite[3.2 and end of Section~3.5]{IL}. We use the
one related to tangential projections, as in \cite[Remark
3.2.11]{IL}.

 Suppose $X\subset\p^N$ is non-degenerate, as always,
 let $x\in X_{\reg}$ and
consider the projection from $T_xX$ onto a disjoint $\p^{c-1}$
\begin{equation}\label{tangentdef}
\pi_x:X\map W_x\subseteq\p^{c-1}.
\end{equation}
The map $\pi_x$ is not defined along the scheme $T_xX\cap X$, which contains $x$,
and it is associated to the linear system of hyperplane
sections cut out by hyperplanes containing $T_xX$, or equivalently
by the hyperplane sections singular~at~$x$.

Let $\phi:\Bl_xX\to X$ be the blow-up of $X$ at $x$, let
\[E=\p((t_xX)^*)=\p^{n-1}\subset\Bl_xX\] be the exceptional
divisor and let $H$ be a hyperplane section of $X\subset\p^N$. The
induced rational map $\widetilde{\pi}_x:\Bl_xX\map\p^{c-1}$ is
defined as a rational map along $E$ since $X\subset\p^N$ is not a linear space, 
see also the discussion below.
 
The restriction of
$\widetilde{\pi}_x$ to $E$ is given by a linear system in
$|\phi^*(H)-2E|_{|E}\subseteq|-2E_{|E}|=|\O_{\p((t_xX)^*)}(2)|=\p(S^2(t_xX))$, whose base locus
scheme will be denoted by $B_{x,X}$.

Consider the strict transform scheme of $T_xX\cap X$ on $\Bl_xX$, denoted from now on by 
$\widetilde T=\Bl_x(T_xX\cap X)$. Then $\widetilde T$ is the base locus scheme of $\widetilde{\pi}_x$ and the restriction of
$\widetilde{\pi}_x$ to $E$ has base locus scheme equal to
\begin{equation}\label{Base}
\widetilde T\cap E=\p(C_x(T_xX\cap X))=B_{x,X}\subset \p((t_xX)^*).
\end{equation}

\begin{Definition}
 The {\it second fundamental form}
$|II_{x,X}|\subseteq\p(S^2(t_xX))$ of a connected equidimensional
non-degenerate projective variety $X\subset\p^N$ of dimension $n\geq 2$ at a
point $x\in X_{\reg}$ is the non-empty linear system of quadric
hypersurfaces in $\p((t_xX)^*)$ defining the restriction of
$\widetilde{\pi}_x$ to $E$ and $B_{x,X}\subset\p((t_xX)^*)$ is the so called {\it base locus scheme
of the second fundamental form of $X$ at $x$}.
\end{Definition}
 
Clearly  $\dim(|II_{x,X}|)\leq c-1$ and
$\widetilde{\pi}_x(E)\subseteq W_x\subseteq\p^{c-1}$. 
Let $\widetilde I\subset S$ be the homogeneous ideal generated by the $r\leq c$ linearly independent quadratic forms in the second fundamental form of $X$ at $x$. Then via \eqref{Base} we obtain  
\begin{equation}\label{projBx}
{\rm Proj (\frac{S}{\widetilde I})}=B_{x,X}=\p(C_x(T_xX\cap X))={\rm Proj (\frac{S}{I^*})}\subset\p((t_xX)^*).
\end{equation}

In conclusion we have proved the following results by combining \eqref{Base} with \eqref{fund} and \eqref{projBx}.
\medskip

\begin{Corollary}\label{quadraticformLx} Let $X\subset\p^N$ be a non-degenerate projective variety, let $x\in X_{\reg}$ be a  point and let notation be as above. Then:
\begin{enumerate}
\item $\mathcal L_{x,X}\subseteq B_{x,X}$;

\item if $X\subset\p^N$ is quadratic, then equality holds and 
 $\mathcal L_{x,X}\subset\p((t_xX)^*)$ can be defined scheme theoretically by the $r\leq c$ quadratic equations defining the second fundamental form of $X$ at $x$.
\end{enumerate} 
\end{Corollary}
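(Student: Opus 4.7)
The proof is essentially a matter of stitching together pieces already assembled in the preceding subsections, so the plan is short.

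For part (1), I would observe that the explicit description of the equations of $\mathcal L_{x,X}\subset E=\p((t_xX)^*)$ in \eqref{eqLxE} gives the ideal-theoretic inclusion $I^*\subseteq J$, hence the scheme-theoretic inclusion \eqref{inclLxBx}
\[
\mathcal L_{x,X}\subseteq \p(C_x(T_xX\cap X))
\]
as subschemes of $\p((t_xX)^*)$. On the other hand, applying the blow-up/tangential projection picture, the identification \eqref{Base} realizes $\p(C_x(T_xX\cap X))$ as the base locus scheme $B_{x,X}$ of the restriction of $\widetilde{\pi}_x$ to the exceptional divisor $E$. Combining these two identifications produces $\mathcal L_{x,X}\subseteq B_{x,X}$ as schemes, which is (1).

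For part (2), the quadratic hypothesis is exactly what is needed to upgrade the containment $I^*\subseteq J$ to an equality. Indeed, when $X\subset\p^N$ is quadratic, Proposition~\ref{quadraticLx} gives $I=I^{*}=J$, hence
\[
\mathcal L_{x,X}=\p(C_x(T_xX\cap X))=B_{x,X}
\]
scheme theoretically, the last equality being again \eqref{Base}. The final assertion about the number of defining equations then follows from the very definition of the second fundamental form as the linear system of quadrics on $\p((t_xX)^*)$ cutting out $B_{x,X}$: since $\widetilde{\pi}_x(E)\subseteq W_x\subseteq \p^{c-1}$ we have $\dim|II_{x,X}|\leq c-1$, so the number of linearly independent quadrics in $\widetilde I$ satisfies $r\leq c$. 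By \eqref{projBx} these $r$ quadrics define $B_{x,X}=\mathcal L_{x,X}$ scheme theoretically in $\p((t_xX)^*)$.

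There is no real obstacle beyond bookkeeping; the one point that deserves attention is to make sure everything matches scheme theoretically, not merely set theoretically or up to radical. Concretely this reduces to the identification $\widetilde I = I^*$ of the ideal generated by the second fundamental form with the ideal of initial forms of \eqref{eqTxtx} — which is exactly \eqref{projBx} — so one only needs to cite it carefully and avoid confusing the reduced and non-reduced structures highlighted in Example~\ref{exscrolls}.
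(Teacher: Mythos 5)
Your proposal is correct and follows essentially the same route as the paper, which states that the corollary is obtained by combining \eqref{Base} with \eqref{fund} and \eqref{projBx}: part (1) is the inclusion \eqref{inclLxBx} read through the identification \eqref{Base}, and part (2) is Proposition \ref{quadraticLx} (i.e.\ $I=I^*=J$) plus the bound $r\leq c$ on the number of quadrics in $|II_{x,X}|$. The scheme-theoretic matching you flag is exactly the content of \eqref{projBx}, so nothing is missing.
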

\medskip

\begin{Remark}\label{ci} The previous result has many important applications. We recall that, as proved in \cite{DD}, if $X\subset\p^N$ is 
a quadratic manifold and if $c\leq\frac{n-1}{2}$, then, for $x\in X$ general,  $\mathcal L_{x,X}\subset\p((t_xX)^*)$ is the complete intersection
of the $c$ linearly independent quadratic polynomials defining $|II_{x,X}|$. Then $\mathcal L_{x,X}$ has dimension $n-1-c$
from which it follows that $X\subset\p^N$ is a complete intersection. This proves the Hartshorne Conjecture
on complete intersections in the quadratic case and also leads to the classification of quadratic Hartshorne manifolds, see \cite[Theorem 2.4 and Section 4]{DD} for details. 

The paper \cite{PR} considers also  irreducible projective varieties $X\subset\p^{2n+1}$ which are
3--covered by twisted cubics, i.e.  such that through three general points of $X\subset\p^{2n+1}$ there passes a twisted cubic contained in $X$. A key remark for the classification of these varieties  is \cite[Theorem 5.2]{PR}, which among other things shows that for such an $X$  the equality   $\mathcal L_{x,X}=B_{x,X}$ holds for $x\in X$ general.
A posteriori all the known examples of varieties 3--covered by twisted cubics are  projectively equivalent to the so called {\it twisted cubics over Jordan algebras}, which are quadratic, see {\it loc. cit} for definitions and details.
This fact has also many important consequences for the theory of Jordan algebras and for the classification of {\it quadro-quadric}
Cremona transformations, as  shown  in the forthcoming paper \cite{PR2}. 
\end{Remark}

\subsection{Approach to $B_{x,X}=\mathcal L_{x,X}$ via \cite{BEL}}\label{implicit}
For manifolds $X\subset\p^N$ there is another approach 
based on a construction of \cite{BEL} elaborating and generalizing an idea due to Severi, see {\it loc. cit.}
It can be used to give a  proof of a weaker form of Corollary \ref{quadraticformLx}
(in the sense that we shall prove it only for $x\in X$ general); this approach illustrates  the local nature of the second fundamental form.
Let us remark that the treatment in the general setting developed in the previous sections 
is unavoidable because the point $x\in X$ is not necessarily general on the complete intersection $Y\supseteq X$ we now construct.

It was proved in \cite{BEL} that given a manifold $X=V(f_1,\ldots, f_m)\subset\p^N$ as above,  we can choose $g_i\in H^0(\I_X(d_i))$, $i=1,\ldots, c$ such that
\begin{equation}\label{YX}
Y=V(g_1,\ldots, g_c)=X\cup X',
\end{equation}
where $X'$ (if nonempty) meets $X$ in a divisor $D$. Moreover from \eqref{YX} it follows 
\begin{equation}\label{chernD}
\O_X(D)\simeq\det(\frac{\I_X}{\I_X^2})\otimes\O_X(\sum_{i=1}^{c}d_i)\simeq
\O_X(d-n-1)\otimes\omega_X^*,
\end{equation}
see also \cite[pg. 597]{BEL}. We now illustrate the usefulness of this construction 
by proving some  facts and  results contained 
in \cite[Theorem 2.4]{DD}.
\medskip

Suppose that $X\subset \p^N$ is a quadratic manifold and consider a  point $x\in U=X\setminus\Supp(D)$. By definition $Y\setminus\Supp (D)=U\amalg V$, where $V=X'\setminus\Supp(D)$.
Consider the two schemes $T_xX\cap X\cap U$ and $T_xY\cap Y\cap U$. Since $t_xX=t_xY$ and since $Y\cap U=X\cap U$
by the above construction, we obtain the equality as schemes
$$C_x(T_xX\cap X)=C_x(T_xX\cap X\cap U)=C_x(T_xY\cap Y\cap U)=C_x(T_xY\cap Y).$$
Via \eqref{fund} we deduce the following equality as subschemes of $\p((t_xX)^*)$: 
\begin{equation}\label{asinDD} 
\mathcal L_{x,Y}=\p(C_x(T_xY\cap Y))=\p(C_x(T_xX\cap X))=\mathcal L_{x,X}.
\end{equation}

Since $\mathcal L_{x,Y}$ can be scheme-theoretically defined by $r\leq c$ linearly independent quadratic equations, the same is true for $\mathcal L_{x,X}$. 
Now, without assuming anymore that $X$ is quadratic, since $x\in X$ is general, $\mathcal L_{x,X}$ is smooth and hence reduced. Clearly a line $L$ passing through $x$ is contained in $X$
if and only if it is contained in $Y$, yielding $\mathcal L_{x,X}=(\mathcal L_{x,Y})_{\red}$, see \cite[Theorem 2.4]{DD}. We proved:
\medskip

\begin{Proposition}\label{settheoretically} Let $X\subset\p^N$ be a  manifold, let notation be as above and let $x\in U$ be a general point. Then:
\begin{enumerate}
\item $\mathcal L_{x,X}=(\mathcal L_{x,Y})_{\red}$ so that $\mathcal L_{x,X}$ can be defined
set theoretically by the  $r\leq d$ equations defining $\mathcal L_{x,Y}$ scheme theoretically.
In particular, if $d\leq n-1$, then $\mathcal L_{x,X}\neq \emptyset$.
\item If $X\subset\p^N$ is quadratic, then
$\mathcal L_{x,X}=\mathcal L_{x,Y}$ so that $\mathcal L_{x,X}\subset\p((t_xX)^*)$ is a quadratic manifold  defined
scheme theoretically by  at most  $c$ quadratic equations.
\end{enumerate}
\end{Proposition}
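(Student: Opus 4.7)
My plan is to assemble into a single statement the observations already made in the preceding paragraphs of \S\ref{implicit}. First I would invoke the \cite{BEL} construction to produce $g_1,\ldots,g_c$ with $Y=V(g_1,\ldots,g_c)=X\cup X'$ and $X\cap X'=D$, choosing the $g_i$ so that $\sum_{i=1}^c(d_i-1)=d$ realizes the minimum in the definition of $d$. For part (2) the quadratic hypothesis allows me to take all $d_i=2$, so that $Y$ becomes a complete intersection of $c$ quadrics.

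Next I would establish, for $x\in U=X\setminus\Supp(D)$, the set-theoretic equality of the supports of $\mathcal L_{x,X}$ and $\mathcal L_{x,Y}$. The inclusion $X\subset Y$ gives $\mathcal L_{x,X}\subseteq\mathcal L_{x,Y}$ as schemes. For the reverse inclusion as sets, a line $L\subset Y$ through $x$ is irreducible, so $L\subset X$ or $L\subset X'$; if $L\subset X'$, then $x\in X\cap X'=D$, contradicting $x\in U$. Hence a line through $x$ lies in $X$ if and only if it lies in $Y$.

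For part (1) I would combine this set-theoretic equality with Proposition \ref{Yx}: since $x$ is general on the manifold $X$, $\mathcal L_{x,X}$ is smooth, hence reduced, so the inclusion $\mathcal L_{x,X}\subseteq\mathcal L_{x,Y}$ together with the equality of supports forces $\mathcal L_{x,X}=(\mathcal L_{x,Y})_{\red}$ as schemes. The equation count comes directly from \eqref{eqLxE} applied to the complete intersection $Y$, giving $r\leq\sum_{i=1}^c(d_i-1)=d$ defining equations for $\mathcal L_{x,Y}$ and hence, set-theoretically, for $\mathcal L_{x,X}$. Non-emptiness when $d\leq n-1$ is then immediate by a dimension count in $\p^{n-1}=\p((t_xX)^*)$, since $r$ hypersurfaces in $\p^{n-1}$ with $r\leq n-1$ have non-empty intersection.

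For part (2) I would instead appeal to the identification \eqref{asinDD} already derived from \eqref{fund}: when $X$ (and therefore $Y$) is quadratic, $X$ and $Y$ coincide in a neighborhood of $x\in U$, so $C_x(T_xX\cap X)=C_x(T_xY\cap Y)$ as schemes, and Proposition \ref{quadraticLx} upgrades this to the scheme-theoretic equality $\mathcal L_{x,X}=\mathcal L_{x,Y}$. Smoothness of $\mathcal L_{x,X}$ at a general $x$ follows again from Proposition \ref{Yx}, and the $\leq c$ quadratic equations come from \eqref{eqLxE} applied to $Y$. The only genuinely delicate step is the passage from set-theoretic to scheme-theoretic equality in part (1); everything else is bookkeeping from the previous subsections.
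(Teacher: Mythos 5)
Your proposal is correct and follows essentially the same route as the paper: the \cite{BEL} construction of the complete intersection $Y=X\cup X'$, the set-theoretic identification of lines through $x\in U$ (made slightly more explicit in your write-up via irreducibility of $L$), smoothness of $\mathcal L_{x,X}$ from Proposition \ref{Yx} to pass to $(\mathcal L_{x,Y})_{\red}$ in part (1), and the equality of tangent cones $C_x(T_xX\cap X)=C_x(T_xY\cap Y)$ together with Proposition \ref{quadraticLx} for the scheme-theoretic equality in part (2). No gaps.
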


\subsection{Lines on prime Fano manifolds}\label{Fano}

Let $X\subset\p^N$ be a (non-degenerate) manifold of dimension $n\geq 2$.
For a general point $x\in X$ we know that $\mathcal L_x\subset\p^{n-1}$ is smooth, Proposition \ref{Yx}.
There are well-known examples when $\mathcal L_x\subset\p^{n-1}$ is not irreducible, such as $X=\p^a\times\p^b\subset\p^{ab+a+b}$ Segre embedded, and also examples where $\mathcal L_x\subset\p^{n-1}$ is degenerate,
see Example \ref{exscrolls} and also table \eqref{contact} below. A relevant class of manifolds where the properties of smoothness, irreducibility
 and non-degeneracy of $X\subset\p^N$ are transfered to $\mathcal L_x\subset\p^{n-1}$ consists of prime Fano manifolds of high index, which we now define.

A manifold $X\subset\p^N$ is called a {\it prime Fano manifold} if $-K_X$ is ample and if 
$\Pic(X)\simeq\mathbb Z\langle\O(1)\rangle$. The {\it index of $X$} is the positive integer
defined by $-K_X=i(X)H$, with $H$ a hyperplane section of $X\subset\p^N$.

Let us recall some fundamental facts. Part (1) below  is well known and follows from the previous discussion except for a fundamental
Theorem of Mori which implies that for prime Fano manifolds of index greater than $\frac{n+1}{2}$, necessarily $\mathcal L_x\neq\emptyset$, see \cite{Mori} and \cite[Theorem V.1.6]{Kollar}.
\medskip

\begin{Proposition} Let $X\subset\p^N$ be a projective manifold and let $x\in X$ be a general 
point. Then 
\begin{enumerate}

\item If $\mathcal L_x\neq\emptyset$, then for every  $[L]\in\mathcal L_x$ 
we have  $\dim_{[L]}(\mathcal L_x)=-K_X\cdot L-2.$ 
In particular for prime Fano manifolds of index $i(X)\geq \frac{n+3}{2}$ 
the variety $\mathcal L_x\subset\p^{n-1}$ is irreducible (and in particular non-empty!).
 
\item {\rm (\cite{Hwang})} If $X\subset\p^N$ is a prime Fano manifold 
of index $i(X)\geq \frac{n+3}{2}$, then $\mathcal L_x\subset\p^{n-1}$ is a  
non-degenerate manifold of dimension $i(X)-2$.
\end{enumerate}
\end{Proposition}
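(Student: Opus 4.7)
The plan is to dispose of the dimension formula and irreducibility first, and then to identify non-degeneracy as the one genuinely deep step.

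First I would compute $\dim_{[L]}(\mathcal L_x)$ via normal bundles. The inclusion $N_{L/X}\hookrightarrow N_{L/\p^N}\simeq\O_{\p^1}(1)^{N-1}$ recalled in \eqref{ai1} forces the summands $a_i$ of $N_{L/X}$ to satisfy $a_i\leq 1$. Since $x$ is general, the result of Debarre cited in the proof of Proposition \ref{Yx} ensures that $N_{L/X}$ is globally generated for every $[L]\in\mathcal L_x$, so each $a_i\in\{0,1\}$ and the splitting \eqref{split} applies. Hence $\mathcal L_x$ is unobstructed and smooth at $[L]$ with
\[
\dim_{[L]}(\mathcal L_x)=h^0(N_{L/X}(-1))=\deg N_{L/X}.
\]
The normal bundle sequence $0\to T_L\to T_X|_L\to N_{L/X}\to 0$ gives $\deg N_{L/X}=-K_X\cdot L-2$, and for a prime Fano with hyperplane generator $H$ one has $H\cdot L=1$ and $-K_X\cdot L=i(X)$. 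Thus every component of $\mathcal L_x$ has dimension $i(X)-2$.

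For the irreducibility statement in part (1) suppose $\mathcal L_x$ had two distinct components $\mathcal L_x^l,\mathcal L_x^p$. Both have dimension $i(X)-2$, and as soon as $i(X)\geq(n+3)/2$ one has $\dim(\mathcal L_x^l)+\dim(\mathcal L_x^p)=2(i(X)-2)\geq n-1$. Proposition \ref{Yx}(2) then forces these components to meet inside $\p^{n-1}$ and to produce a singular point of $\mathcal L_x$, whence a singular point of $X$, contradicting the smoothness of $X$; so $\mathcal L_x$ must be irreducible. Non-emptiness is a separate input: Mori's bend-and-break produces through any point of a Fano $n$-fold a rational curve $C$ with $-K_X\cdot C\leq n+1$, and for a prime Fano with $i(X)>(n+1)/2$ this forces $H\cdot C=1$, i.e.\ $C$ is a line through $x$.

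For part (2) the smoothness of $\mathcal L_x\subset\p^{n-1}$ is immediate from Proposition \ref{Yx}(1) applied to the smooth $X$, while dimension $i(X)-2$ and irreducibility follow from part (1). The remaining and genuinely deep point is non-degeneracy, which is the content of Hwang's theorem \cite{Hwang}. The strategy there is that the linear span of $\mathcal L_x$, as $x$ varies over a dense open of $X$, defines a distribution $\mathcal W\subseteq TX$ whose fibers contain all tangent directions of minimal rational curves; combining the deformation theory of these curves with the assumption $\Pic(X)\simeq\Z\langle\O(1)\rangle$, one shows that a proper $\mathcal W$ would integrate to a non-trivial foliation and force a fibration of $X$, contradicting the primeness of the Picard group. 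This Hwang--Mok style argument is precisely the main obstacle and cannot be derived from the elementary considerations of \S\ref{Lx}.
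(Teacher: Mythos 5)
Your argument is correct and follows essentially the same route as the paper, which proves part (1) by exactly this combination of the splitting \eqref{split}, the global generation of $N_{L/X}$ for $x$ general, Proposition \ref{Yx}(2) for irreducibility, and Mori's theorem for non-emptiness, while deferring the non-degeneracy in part (2) to \cite{Hwang}. You have merely written out in full the details the paper leaves to "the previous discussion".
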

\medskip

Let us finish this section by looking at another significant example in which  meaningful geometrical properties of $X\subset\p^N$ are  reflected in  
similar properties of $\mathcal L_x\subset\p^{n-1}$, when this is non-empty.
\medskip

\begin{Example}\label{excomp}
Let  $X\subset\p^N$ be a smooth complete intersection of type  $(d_1,d_2,\ldots,d_c)$ with  $d_c\geq 2$. Then:

\begin{itemize}

\item if $n+1-d>0,$ then $X$ is a Fano manifold and $i(X)=n+1-d$;

\item if $n\geq 3$, then  $\Pic(X)\simeq\mathbb Z\langle\O(1)\rangle$;

\item if $i(X)\geq 2$, then $\mathcal L_x\neq\emptyset$ and for every $[L]\in\mathcal L_x$ we have $$\dim_{[L]}(\mathcal L_x)=(-K_X\cdot L)-2=i(X)-2=n-1-d\geq 0 ,$$ so  that $\mathcal L_x\subset\p^{n-1}$ is a smooth complete intersection of type $$(2,\ldots,d_1; 
2,\ldots, d_2;\ldots;2,\ldots d_{c-1}; 2,\ldots,d_c)$$ since it is scheme theoretically defined by the $d$ equations in \eqref{eqLxE}.

\end{itemize}
\end{Example}

\section{A condition for non-extendability}\label{ext}
\begin{Definition} Let us consider $H=\p^N$ as a hyperplane in $\p^{N+1}$. Let $Y\subset\p^N=H$
be a smooth (non-degenerate) irreducible variety of dimension $n\geq 1$. An irreducible variety
$X\subset\p^{N+1}$ will be called {\it an extension of $Y$} if 
\medskip

\begin{enumerate}
\item $\dim(X)=\dim(Y)+1$;
\medskip

\item $Y=X\cap H$ as a scheme.
\end{enumerate}
\end{Definition}
\medskip

For every $p\in \p^{N+1}\setminus H$, the irreducible cone 
$$X=S(p,Y)=\bigcup_{y\in Y}<p,y>\subset\p^{N+1}$$
is an extension of $Y\subset\p^N=H$, which will be called {\it trivial}. Let us observe that for any extension $X\subset\p^{N+1}$ of $Y\subset\p^N$ we necessarily have $\#(\Sing(X))<\infty$ since $X$ is smooth along the very ample divisor $Y=X\cap H$.
We also remark that in our definition $Y$ is a fixed hyperplane section. In the classical approach
usually it was required that $H$ was a general hyperplane section of $X$, see for example \cite{Scorza1}.
Under these more restrictive hypotheses one  can always suppose that a general point on $Y$ is also a  general point on $X$.
\medskip

\subsection{Extensions of $\mathcal L_{x,Y}\subset\p^{n-1}$ via $\mathcal L_{x,X}\subset\p^n$}

Let $y\in Y$ be a general point and let us consider an extension $X\subset\p^{N+1}$ of $Y$ and an irreducible component $\mathcal L_{y,Y}^j$ of $\mathcal L_{y,Y}\subset\p^{n-1}$,
which is  a smooth irreducible variety by Proposition \ref{Yx}. The results of \S 1 yield that this property is immediately translated in terms of Hilbert schemes of lines. Indeed we deduce the following result, where part (4) requires an ad hoc proof
since in our hypotheses the point $y\in Y$ is general on $Y$, but not necessarily on $X$, so that we cannot apply
Proposition \ref{Yx}.
\medskip

\begin{Proposition}\label{extlines} Let $X\subset\p^{N+1}$ be an irreducible projective variety which is an extension of
the non-degenerate manifold $Y\subset\p^N$. Let $n=\dim(Y)\geq 1$ and let $y\in Y$ be an arbitrary point such that
$\mathcal L_{y,Y}\neq\emptyset$. Then: 
\begin{enumerate}
\item $\mathcal L_{y,X}\cap \p((t_yY)^*)=\mathcal L_{y,Y}$ as schemes.

\item if $y\in Y$ is general, then $\dim_{[L]}(\mathcal L_{y,X})=\dim_{[L]}(\mathcal L_{y,Y})+1$ and $[L]$ is a smooth
point of $\mathcal L_{y,X}$ for every $[L]\in\mathcal L_{y,Y}.$

\item if $y\in Y$ is general and if $\mathcal L_{y,Y}^j$ is an irreducible component of positive dimension,
then there exists an irreducible component $\mathcal L_{y,X}^j$ such that $\mathcal L_{y,Y}^j=\mathcal L_{y,X}^j\cap\p((t_yY)^*)$
as schemes.

\item If $y\in Y$ is general, then $\Sing(\mathcal L_{y,X})\subseteq S_{y,X}$.
\end{enumerate}

\end{Proposition}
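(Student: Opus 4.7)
The plan is to handle the four items in order, leveraging the coordinate equations from Section \ref{equations} and the normal-bundle decomposition recalled in Section \ref{sing}.

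For (1) I would set up coordinates on $\p^{N+1}$ so that $H=V(x_{N+1})$, $y=(1{:}0{:}\cdots{:}0)$, and, in the affine chart $y_\ell=x_\ell/x_0$, the tangent spaces become $t_yY=V(y_{n+1},\ldots,y_N,y_{N+1})$ and $t_yX=V(y_{n+1},\ldots,y_N)$, so that $\p((t_yY)^*)\subset\p((t_yX)^*)$ is cut out by $y_{N+1}=0$. Writing $X=V(f_1,\ldots,f_m)$, the equations \eqref{eqLxE} defining $\mathcal L_{y,X}$ in $\p((t_yX)^*)$ are the higher homogeneous pieces $\widetilde f_i^j(y_1,\ldots,y_n,y_{N+1})$, and since $Y=X\cap H$ is defined by the restrictions of the $f_i$ to $H$ the corresponding equations for $\mathcal L_{y,Y}$ are obtained by further setting $y_{N+1}=0$. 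This yields directly the scheme-theoretic equality in (1).

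Parts (2) and (3) I would derive from the normal-bundle sequence
$$0\to N_{L/Y}\to N_{L/X}\to N_{Y/X}|_L\to 0,$$
where $N_{Y/X}|_L\simeq\O_L(1)$ because $Y$ is a hyperplane section of $X$. For (2), with $[L]\in\mathcal L_{y,Y}$ and $y\in Y$ general, Proposition \ref{Yx} applied to the manifold $Y$ gives $N_{L/Y}\simeq\O(1)^s\oplus\O^{n-1-s}$ with $s=s(L,Y)$; the sequence splits since $\mathrm{Ext}^1(\O(1),\O(a))=H^1(\O(a-1))=0$ for $a\geq 0$, so $N_{L/X}\simeq\O(1)^{s+1}\oplus\O^{n-1-s}$ is globally generated and the discussion around \eqref{split} gives smoothness of $\mathcal L_{y,X}$ at $[L]$ with local dimension $s+1=\dim_{[L]}\mathcal L_{y,Y}+1$. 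For (3) I would pick a general $[L]\in\mathcal L_{y,Y}^j$ and let $\mathcal L_{y,X}^j$ be the unique irreducible component of $\mathcal L_{y,X}$ through it (unique by the smoothness from (2)); it has dimension $\dim\mathcal L_{y,Y}^j+1$, contains $\mathcal L_{y,Y}^j$ by irreducibility, and is not contained in $\p((t_yY)^*)$ by a dimension count, so $\mathcal L_{y,X}^j\cap\p((t_yY)^*)$ is a Cartier divisor of the correct dimension, contained in $\mathcal L_{y,Y}$ by (1) and smooth along and locally equal to $\mathcal L_{y,Y}^j$ by (2), giving the required scheme-theoretic equality.

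Part (4) is the step that genuinely needs an ad hoc argument, since $y$ is general on $Y$ but not necessarily on $X$, so Proposition \ref{Yx} cannot be invoked on $\mathcal L_{y,X}$ directly. My plan is to decompose $\mathcal L_{y,X}\setminus S_{y,X}$ as the disjoint union of the closed piece $\mathcal L_{y,Y}$, already smooth by (2), and the open piece $\mathcal L_{y,X}^\circ:=\{[L]:L\not\subset Y,\;L\cap\Sing X=\emptyset\}$. For the latter, the transversality $L\cap Y=\{y\}$ (a single reduced point, since $L\not\subset H$) makes $[L]\mapsto L\cap H$ into a well-defined morphism $\beta$ from the open subscheme $\{[L]\in\mathcal L_X:L\subset X_{\reg},\;L\not\subset Y\}$ of $\mathcal L_X$ to $Y$, with fiber $\beta^{-1}(y)=\mathcal L_{y,X}^\circ$. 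Applying generic smoothness in characteristic zero component by component to $\beta$, one obtains that for $y$ outside a proper closed subset of $Y$ every fiber of $\beta$ is smooth as a scheme, and combining with (2) yields $\Sing(\mathcal L_{y,X})\subseteq S_{y,X}$. The main obstacle is precisely this step: unlike (2) the argument cannot be purely normal-bundle theoretic, and the replacement relies on the very ampleness of $Y\subset X$ to make $\beta$ a genuine morphism to $Y$ and on generic smoothness to propagate smoothness of the fibers to every $[L]\notin S_{y,X}$.
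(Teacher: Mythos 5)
Parts (1) and (2) of your proposal are correct and follow the paper's own route: (1) is read off from the equations \eqref{eqLxE} after adapting coordinates to the flag $t_yY\subset t_yX$, and (2) is the normal bundle sequence $0\to N_{L/Y}\to N_{L/X}\to \O_{\p^1}(1)\to 0$ together with global generation of $N_{L/Y}$ for $y\in Y$ general. In (3), however, you stop one step short. Showing that $\mathcal L_{y,X}^j\cap\p((t_yY)^*)$ is a hyperplane section of pure dimension $\dim\mathcal L_{y,Y}^j$, contained in $\mathcal L_{y,Y}$ and equal to $\mathcal L_{y,Y}^j$ \emph{locally along} $\mathcal L_{y,Y}^j$, does not exclude additional components sitting inside other components of $\mathcal L_{y,Y}$. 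The paper rules these out by connectedness: a hyperplane section of the irreducible variety $(\mathcal L_{y,X}^j)_{\red}$, which has dimension $\geq 2$, is connected (Fulton--Hansen, \cite{FH}), whereas the components of the smooth scheme $\mathcal L_{y,Y}$ are pairwise disjoint. This is precisely where the hypothesis $\dim\mathcal L_{y,Y}^j\geq 1$ is used, and your argument never invokes it --- a sign that the step is missing.

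The genuine gap is in (4). Generic smoothness in characteristic zero requires the source of the morphism to be smooth (or at least one must restrict to the smooth locus of the reduction). Your $\beta$ is defined on an open subscheme of the Hilbert scheme of lines of $X$, which at this point is not known to be smooth, nor even reduced: establishing that is essentially the content of (4) itself. Applying generic smoothness componentwise to the smooth locus of the reduction only yields that $\beta^{-1}(y)\cap(\text{smooth locus})$ is smooth for general $y$; it gives no control over points of $\mathcal L_{y,X}$ lying over the singular locus of $(\mathcal L_X)_{\red}$ (which can dominate $Y$ when a component has dimension $>n$) or over generically non-reduced components, and these are exactly the points at issue. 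The statement you are implicitly using --- every line avoiding $\Sing(X)$ and passing through a suitably general point is free, hence an unobstructed, smooth point of the Hilbert scheme --- is \cite[Proposition 4.9]{Debarre}, whose proof is the careful version of your argument run on the evaluation map of the universal family, and it only produces an open set $U\subseteq X$. The whole difficulty of (4) is that $y$ is general on the divisor $Y$ but possibly not in $U$. The paper resolves this with a dichotomy: if $U\cap Y\neq\emptyset$ one concludes at once; if $Y\cap U=\emptyset$, then $Y$ is an irreducible component of $X\setminus U$, so a line through a general $y\in Y$ that misses $U$ must be contained in $Y$, and there \eqref{normale} applies. Your construction of $\beta$ is a nice idea, but as written it assumes the smoothness of the parameter space that the proposition is meant to deliver.
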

\begin{proof} Let  $Y=X\cap H$, with $H=\p^N\subset\p^{N+1}$ a hyperplane and let notation be as in subsection \ref{equations}.
The conclusion in (1) immediately follows from \eqref{eqLxE}.

Let us pass to (2) and consider an arbitrary  line $[L]\in \mathcal L_{y,Y}^j$, an irreducible component of the smooth not necessarily irreducible variety $\mathcal L_{y,Y}$.
 We have an exact sequence
of normal bundles
\begin{equation}\label{tangentspace}
0\to N_{L/Y}\to N_{L/X}\to N_{Y/X|L}\simeq\O_{\p^1}(1)\to 0.
\end{equation}

Since $y\in Y$ is general, $N_{L/Y}$ is generated by global sections, see the proof of Proposition \ref{Yx},
so that  \eqref{split} yields

\begin{equation}\label{normale}
N_{L/X}\simeq N_{L/Y}\oplus\O_{\p^1}(1)\simeq \O_{\p^1}(1)^{s(L,Y)+1}\oplus\O_{\p^1}^{n-s(L,Y)-1}.
\end{equation}

Thus also $N_{L/X}$ is generated by global sections,  $\mathcal L_{y,X}$ is smooth at $[L]$ and 
$\dim_{[L]}(\mathcal L_{y,X})=\dim_{[L]}(\mathcal L_{y,Y})+1$, proving (2). 

Therefore if $y\in Y$ is general,  
there exists a unique irreducible component of $\mathcal L_{y,X}\subset\p((t_yX)^*)$, let us say $\mathcal L_{y,X}^j$, containing $[L]$ and by the previous calculation
$\dim(\mathcal L^j_{y,X})=s(L,Y)+1=\dim(\mathcal L_{y,Y}^j)+1$. Recall that by part (1) we have $t_{[L]}\mathcal L_{y,Y}=t_{[L]}\mathcal L_{y,X}\cap \p((t_yY)^*)$ so that
\begin{equation}\label{extYx}
\mathcal L_{y,Y}^j\subseteq \mathcal L_{y,X}^j\cap \p((t_yY)^*)\subseteq \mathcal L_{y,Y}\subset \p^{n-1}=\p((t_yY)^*),
\end{equation}
yielding that $\mathcal L_{y,Y}^j$ is an irreducible component of $\mathcal L_{y,X}^j\cap \p((t_yY)^*)$ as well as an irreducible component of the smooth variety $\mathcal L_{y,Y}$.
Hence,  if $\dim(\mathcal L_{y,Y}^j)\geq 1$, we have the equality $\mathcal L^j_{y,Y}= \mathcal L_{y,X}^j\cap \p((t_yY)^*)$ as schemes, 
i.e. under this hypothesis $\mathcal L_{y,X}^j\subset\p((t_yX)^*)$ (or better $(\mathcal L_{y,X}^j)_{\red}$) is   a projective  extension of the smooth positive dimensional irreducible variety $\mathcal L_{y,Y}^j\subset\p((t_xY)^*)$.
Indeed, $\dim(\mathcal L^j_{y,Y})\geq 1$ forces $\dim(\mathcal L_{y,X}^j)\geq 2$ so that it is sufficient to recall that $\mathcal L_{y,X}$ is smooth along $\mathcal L_{y,Y}$ by the previous discussion and also that an
arbitrary hyperplane section of the irreducible variety $(\mathcal L_{y,X}^j)_{\red}$ is connected by the Fulton-Hansen Theorem, \cite{FH}.  More precisely, if $\dim(\mathcal L_{y,Y}^j)\geq 1$, then equality as schemes holds in \eqref{extYx}, proving part (3).

By  \cite[Proposition 4.9]{Debarre} there exists a non-empty  open subset $U\subseteq X$ such that
$N_{\widetilde L/X}$ is generated by global sections for every  line $\widetilde L\subset X_{\reg}$ 
intersecting $U$. If $U\cap Y\neq \emptyset$, then (4) clearly holds. Suppose $Y\cap U=\emptyset$.
Let  $[\widetilde L]\in \mathcal L_{y,X}\setminus S_{y,X}$. If $\widetilde L\cap U\neq \emptyset$, then $[\widetilde L]$ is
a smooth point of $\mathcal L_{y,X}$ by the previous analysis. If $\widetilde L\cap U=\emptyset$, then $\widetilde L\subset Y$
by the generality of $y\in Y$ and $N_{\widetilde L/X}$ is generated by global sections by \eqref{normale},
concluding the proof of (4).
\end{proof}
\medskip

Now we are in position to prove  the main result of this section and  to deduce some  applications.
\medskip

\begin{Theorem}\label{criterion}
Let notation be as above and let $y\in Y$ be a general point. Then:
\begin{enumerate}

\item Suppose there exist two   distinct irreducible components $\mathcal L_{y,X}^1$ and $\mathcal L_{y,X}^2$ of $\mathcal L_{y,X}\subset\p((t_yX)^*)$, extending two irreducible components $\mathcal L_{y,Y}^1$, respectively $\mathcal L_{y,Y}^2$, of $\mathcal L_{y,Y}$ in the sense specified above. If $\mathcal L_{y,X}^1\cap \mathcal L_{y,X}^2\neq\emptyset$, 
then $X\subset\p^{N+1}$ is a cone over $Y\subset\p^N$ of vertex a point $p\in \p^{N+1}\setminus \p^N$.

\item If $\mathcal L_{y,Y}\subset\p((t_yY)^*)$ is a manifold whose extensions are singular, then
every extension of $Y\subset\p^N$ is trivial.
\end{enumerate}
\end{Theorem}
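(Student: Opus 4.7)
The plan is to reduce both parts to the problem of finding a singular point of the scheme $\mathcal L_{y,X}$, and then invoke Proposition~\ref{extlines}(4) which supplies a line $L\subset X$ from $y$ to a point of $\Sing(X)$. Since $Y=X\cap H$ is smooth and is a very ample Cartier divisor on $X$, the variety $X$ is smooth in a neighborhood of $Y$, so $\Sing(X)$ is a finite set contained in $X\setminus Y\subseteq\p^{N+1}\setminus H$. The remaining task is then to use these two facts together with the irreducibility of $Y$ to produce one fixed vertex $p$ with $\langle y,p\rangle\subseteq X$ for every $y\in Y$.

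For part~(1), pick any $[L]\in\mathcal L_{y,X}^1\cap\mathcal L_{y,X}^2$. Two distinct irreducible components meeting at $[L]$ make that point singular on the whole scheme $\mathcal L_{y,X}$, so by Proposition~\ref{extlines}(4) the line $L$ joins $y$ to some $p_y\in\Sing(X)$. I would then form the incidence scheme
\[
Z=\{(y,p)\in U\times\Sing(X)\ :\ \langle y,p\rangle\subseteq X\},
\]
where $U\subseteq Y$ is the dense open over which Proposition~\ref{extlines} applies. The previous sentence shows the projection $Z\to U$ is surjective; since $\Sing(X)$ is finite, $Z$ decomposes as a finite disjoint union $\coprod_{p\in\Sing(X)}Z_p$. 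Irreducibility of $Y$ forces a single $Z_p$ to dominate $Y$, and taking closures of the closed condition $\langle y,p\rangle\subseteq X$ yields $S(p,Y)\subseteq X$. Equality of dimensions and irreducibility of $X$ give $X=S(p,Y)$, a trivial extension with vertex $p\in\p^{N+1}\setminus H$.

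For part~(2), let $X\subset\p^{N+1}$ be an arbitrary extension of $Y$. Since $\mathcal L_{y,Y}$ is an irreducible manifold of positive dimension, Proposition~\ref{extlines}(3) produces an irreducible component $\mathcal L_{y,X}^j$ of $\mathcal L_{y,X}$ with $\mathcal L_{y,Y}=\mathcal L_{y,X}^j\cap\p((t_yY)^*)$ as schemes, and part~(2) of the same Proposition ensures $\dim(\mathcal L_{y,X}^j)=\dim(\mathcal L_{y,Y})+1$; thus $(\mathcal L_{y,X}^j)_{\red}$ is an irreducible projective extension of the manifold $\mathcal L_{y,Y}$. The hypothesis forces it to be singular, and this singular point of $\mathcal L_{y,X}^j$ is also a singular point $[L]$ of $\mathcal L_{y,X}$, at which the argument of part~(1) then applies verbatim. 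The main obstacle is precisely the step passing from ``for a general $y$, \emph{some} line through $y$ meets $\Sing(X)$'' to ``\emph{one fixed} $p\in\Sing(X)$ lies on a line through every $y\in Y$''; the incidence construction above closes this gap using finiteness of $\Sing(X)$ together with the irreducibility of $Y$.
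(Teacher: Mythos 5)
Your proposal is correct and follows essentially the same route as the paper: reduce both parts to producing a singular point of $\mathcal L_{y,X}$, invoke Proposition~\ref{extlines}(4) to get a line from $y$ to the finite set $\Sing(X)$, and use irreducibility of $Y$ plus finiteness of $\Sing(X)$ to pin down a single vertex $p$ with $X=S(p,Y)$. Your incidence-scheme construction merely spells out the paper's one-line appeal to irreducibility at the final step.
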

\begin{proof} By the above discussion, we get that in both cases, for $y\in Y$ general,  the variety $S_{y,X}\subseteq \mathcal L_{y,X}$ is not empty so that for $y\in Y$ general  there exists a line $L_y\subseteq X$ passing through $y$ and through a singular point  $p_y\in L_y\cap \Sing(X)$. Since $Y$ is irreducible and since $\Sing(X)$ consists of a finite 
number of points, there exists $p\in \Sing(X)$ such that $p\in L_y$ for $y\in Y$ general. This implies
that $X=S(p,Y)$ is a cone over $Y$ with vertex $p$. 
\end{proof}
\medskip

The first easy consequence is a result due to Scorza (see \cite{Scorza2} and also \cite{Zakdual}, \cite{Badescu}), proved by him under the stronger assumption that $Y=X\cap H$ is a general hyperplane section of $X$.
Under these more restrictive hypotheses, the analysis before the proof of Theorem \ref{criterion} could  be simplified via Proposition \ref{Yx}, since we may  assume that the general point
$y\in Y$ is also general on $X$.
\medskip

\begin{Corollary}\label{Segre} Let $1\leq a\leq b$ be integers,  let $n=a+b\geq 3$ and let
$Y\subset\p^{ab+a+b}$ be a smooth irreducible variety projectively equivalent to the
Segre embedding $\p^a\times\p^b\subset\p^{ab+a+b}$. Then every extension 
of $Y$ in $\p^{ab+a+b+1}$ is trivial.
\end{Corollary}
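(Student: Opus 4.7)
The plan is to reduce everything to Theorem \ref{criterion}(1) by computing the two components of $\mathcal{L}_{y,Y}$ at a general $y$ of the Segre variety, lifting them to two components of $\mathcal{L}_{y,X}$ via Proposition \ref{extlines}, and forcing them to intersect by a dimension count in $\p^n$.

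First I would describe $\mathcal{L}_{y,Y}$ at a general point $y=(p,q)\in Y=\p^a\times\p^b$. The lines of $Y$ through $y$ come precisely from the two rulings $\p^a\times\{q\}$ and $\{p\}\times\p^b$, so
\begin{equation*}
\mathcal{L}_{y,Y}=\mathcal{L}^1_{y,Y}\sqcup\mathcal{L}^2_{y,Y}\cong \p^{a-1}\sqcup\p^{b-1}
\end{equation*}
sitting inside $\p((t_yY)^*)=\p^{a+b-1}$ as two disjoint linear subspaces whose spans are complementary.

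Now assume $X\subset\p^{ab+a+b+1}$ is an extension of $Y$ and fix $y\in Y$ general. In the main case $a\geq 2$ both components $\mathcal{L}^j_{y,Y}$ have positive dimension, so Proposition \ref{extlines}(3) produces two distinct irreducible components $\mathcal{L}^1_{y,X}$ and $\mathcal{L}^2_{y,X}$ of $\mathcal{L}_{y,X}\subset\p((t_yX)^*)=\p^{a+b}$ extending them, of dimensions $a$ and $b$ respectively. Since $a+b=n=\dim\p^n$, the projective dimension theorem forces $\mathcal{L}^1_{y,X}\cap\mathcal{L}^2_{y,X}\neq\emptyset$, and Theorem \ref{criterion}(1) gives that $X$ is a cone with vertex some $p\in\p^{N+1}\setminus\p^N$.

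The one technical obstacle is the boundary case $a=1$, $b\geq 2$, where $\mathcal{L}^1_{y,Y}=\{[L_0]\}$ is zero-dimensional and Proposition \ref{extlines}(3) does not formally apply to it; here I would work directly with the conclusion of Theorem \ref{criterion} rather than its statement. Proposition \ref{extlines}(2) ensures $\mathcal{L}_{y,X}$ is smooth of dimension $1$ at $[L_0]$, so a unique irreducible component $\mathcal{L}^1_{y,X}$ of dimension $1$ passes through $[L_0]$, and it is distinct from the component $\mathcal{L}^2_{y,X}$ of dimension $b$ extending $\p^{b-1}$ (because $[L_0]\notin\mathcal{L}^2_{y,Y}=\mathcal{L}^2_{y,X}\cap\p((t_yY)^*)$). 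A curve and a hypersurface in $\p^{b+1}$ necessarily meet, so their intersection yields a singular point of $\mathcal{L}_{y,X}$; by Proposition \ref{extlines}(4) this point lies in $S_{y,X}$, so there is a line through $y$ meeting $\Sing(X)$. Finiteness of $\Sing(X)$ together with irreducibility of $Y$ then pins down a fixed vertex $p\in\Sing(X)$, forcing $X=S(p,Y)$ exactly as in the closing lines of the proof of Theorem \ref{criterion}.
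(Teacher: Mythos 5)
Your proposal is correct and follows essentially the same route as the paper: identify $\mathcal L_{y,Y}=\p^{a-1}\amalg\p^{b-1}$, lift via Proposition \ref{extlines} to two distinct components of $\mathcal L_{y,X}$ of dimensions $a$ and $b$ in $\p^{n}$, force an intersection point by the projective dimension theorem since $a+b=n$, and conclude by Theorem \ref{criterion}(1). Your separate treatment of the boundary case $a=1$ (where one component of $\mathcal L_{y,Y}$ is a point and Proposition \ref{extlines}(3) does not formally apply, so you rerun the argument from the proof of Theorem \ref{criterion} via Proposition \ref{extlines}(2) and (4)) is a legitimate extra precaution that the paper passes over implicitly, and the rest matches the published argument.
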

\begin{proof} For $y\in Y$ general, it is well known that $\mathcal L_{y,Y}=\mathcal L_{y,Y}^1\amalg \mathcal L_{y,Y}^2\subset\p^{a+b-1}=\p^{n-1}$ with $\mathcal L_{y,Y}^1=\p^{a-1}$ and $\mathcal L_{y,Y}^2=\p^{b-1}$, both linearly embedded. Observe that
$b-1\geq 1$.  By \eqref{extYx} and the discussion following it,
there exist two irreducible components  $\mathcal L_{y,X}^j$, $j=1,2$,  of $\mathcal L_{y,X}\subset\p^{n}=\p^{a+b}$ with $\dim(\mathcal L_{y,X}^1)=a$ and $\dim(\mathcal L_{y,X}^2)=b$. If $a\neq b$ then clearly $\mathcal L_{y,X}^1\neq \mathcal L_{y,X}^2$. If $a=b\geq 2$, then $\mathcal L_{y,X}^1\neq \mathcal L_{y,X}^2$ because an arbitrary hyperplane section of a variety of dimension at least
2 is connected, see \cite{FH}.
Since $a+b=n$, $\mathcal L_{y,X}^1\cap \mathcal L_{y,X}^2\neq\emptyset$ and the conclusion follows from the first part of Theorem \ref{criterion}.
\end{proof}
\medskip

The previous result
has some interesting consequences via iterated
applications of the second part of Theorem \ref{criterion}. Indeed, let us consider the following homogeneous varieties 
(also known as 
irreducible hermitian symmetric spaces), in their homogeneous embedding, and the description of the Hilbert scheme
of lines passing through a general point, see \cite[\S 1.4.5]{Hwang} and also \cite{Strick}. 
\medskip

\begin{equation}\label{hermitian}
\begin{tabular}{|c|c|c|c|}
\hline  &$Y$   &  $\mathcal L_{y,Y}$   &   $\tau_y:\mathcal L_{y,Y}\to \p((t_yY)^*)$\\
\hline  1 &$\mathbb G(r,m)$   & $\p^r\times\p^{m-r-1}$  & \text{Segre embedding}\\
\hline  2 &$SO(2r)/U(r)$& $\mathbb G(1,r-1)$ & \text{Pl\" ucker embedding}\\
\hline  3 &$E_6$ & $SO(10)/U(5)$& \text{miminal embedding}\\
\hline  4 &$E_7/E_6\times U(1)$   & $E_6$  & \text{Severi embedding}\\
\hline  5 &$Sp(r)/U(r)$ & $\p^{r-1}$ & \text{quadratic Veronese embedding}\\
\hline
\end{tabular}
\end{equation}
\medskip

There are also the following homogeneous contact manifolds with Picard number one associated
to a complex simple Lie algebra $\mathbf g$, whose Hilbert scheme of lines passing through
a general point is known. Let us observe that in these examples the variety 
$\mathcal L_{y,Y}\subset\p^{n-1}=\p((t_yY)^*)$ is degenerate and its linear span is exactly $\p((D_y)^*)=\p^{n-2}$, there $D_y$ is the tangent
space at $y$ of the distribution associated to the contact structure on $Y$, i.e. there is the following factorization $\tau_y:\mathcal L_{y,Y}\to \p((D_y)^*)\subset\p((t_yY)^*)$. For more details one can consult \cite[\S 1.4.6]{Hwang}.\medskip

\begin{equation}\label{contact}
\begin{tabular}{|c|c|c|c|}
\hline  &$\mathbf g$   &  $ \mathcal L_{y,Y}$   &   $\tau_x:\mathcal L_{x,Y}\to \p((D_y)^*$\\
\hline   6 &$F_4$   & $Sp(3)/U(3)$  & \text{Segre embedding}\\
\hline  7 &$E_6$ & $\mathbb G(2,5)$& \text{Pl\" ucker  embedding}\\
\hline  8 &  $E_7$ & $SO(12)/U(6)$  & \text{minimal  embedding}\\
\hline  9 &$E_8$ & $E_7/E_6\times U(1)$& \text{minimal  embedding}\\
\hline  10&${\mathbf so}_{m+4}$& $\p^1\times Q^{m-2}$ & \text{Segre embedding}\\
\hline
\end{tabular}
\end{equation}

\medskip

 By case 1')
we shall denote a variety as in 1) of \eqref{hermitian} satisfying the following numerical conditions:
$r<m-1$; if  $r=1$, then $m\geq 4$. By 2') we shall denote a variety as in 2) with $r\geq 5$. 
\medskip

\begin{Corollary}\label{exthermitian} Let $Y\subset\p^N$ be a manifold as in  Examples 1'), 2'), 3), 4), 7),
8), 9) above. Then every extension of $Y$ is trivial.
\end{Corollary}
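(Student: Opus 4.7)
The plan is to apply Theorem~\ref{criterion}(2) iteratively, exploiting the matryoshka structure of the tables \eqref{hermitian} and \eqref{contact}: in each of the listed cases the Hilbert scheme of lines $\mathcal{L}_{y,Y}$ is either a Segre variety $\mathbb{P}^a\times\mathbb{P}^b$ of the type treated in Corollary~\ref{Segre}, or one of the earlier items in the same tables. Induction on the depth of the nesting will reduce everything to the Segre base case.

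\textbf{Base step.} In Case~1'), $\mathcal{L}_{y,Y}=\mathbb{P}^r\times\mathbb{P}^{m-r-1}$ Segre embedded, and the restrictions ($r<m-1$, together with $m\geq 4$ when $r=1$) are precisely those needed to guarantee $r+(m-r-1)=m-1\geq 3$. Corollary~\ref{Segre} then forces every extension of $\mathcal{L}_{y,Y}$ to be a cone; since $\mathcal{L}_{y,Y}$ is not a linear subspace, any such cone is singular at its vertex. Noting that $\mathcal{L}_{y,Y}$ is itself a (smooth irreducible) manifold for $y$ general by Proposition~\ref{Yx}, Theorem~\ref{criterion}(2) applies and every extension of $Y$ is trivial.

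\textbf{Inductive step.} Reading off the tables, the remaining cases reduce to previously handled ones:
Case~2') with $r\geq 5$ has $\mathcal{L}_{y,Y}=\mathbb{G}(1,r-1)$, which satisfies Case~1') with parameters $(1,r-1)$ and $r-1\geq 4$;
Case~3) has $\mathcal{L}_{y,Y}=SO(10)/U(5)$, which is Case~2') with $r=5$;
Case~4) has $\mathcal{L}_{y,Y}=E_6$, which is Case~3);
Case~7) has $\mathcal{L}_{y,Y}=\mathbb{G}(2,5)$, which is Case~1') with $(r,m)=(2,5)$;
Case~8) has $\mathcal{L}_{y,Y}=SO(12)/U(6)$, which is Case~2') with $r=6$;
and Case~9) has $\mathcal{L}_{y,Y}=E_7/E_6\times U(1)$, which is Case~4). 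At each step the inductive hypothesis gives that every extension of the inner $\mathcal{L}_{y,Y}$ is trivial, i.e.\ a cone over a non-linear projective variety and so singular at the vertex; Theorem~\ref{criterion}(2) then propagates the conclusion one layer outward.

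\textbf{Expected obstacle.} The substantive checks are the numerical inequalities at each invocation of Case~1') or~2'), and the verification at every stage that $\mathcal{L}_{y,Y}$ is a manifold (smoothness is automatic by Proposition~\ref{Yx}, irreducibility is standard for these homogeneous embeddings). A minor subtlety to keep track of is that in the contact cases~7)--9) the scheme $\mathcal{L}_{y,Y}$ is degenerate inside $\mathbb{P}((t_yY)^*)$; however, Theorem~\ref{criterion}(2) only concerns projective extensions of $\mathcal{L}_{y,Y}$ as an embedded scheme---and such an extension is furnished by $\mathcal{L}_{y,X}\subset\mathbb{P}((t_yX)^*)$ via Proposition~\ref{extlines}---so the degeneracy causes no trouble.
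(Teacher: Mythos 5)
Your proposal is correct and follows essentially the same route as the paper: reduce cases 2'), 3), 4), 9) (and 7), 8)) down the chain of tables \eqref{hermitian} and \eqref{contact} to case 1'), whose Hilbert scheme of lines is a Segre variety handled by Corollary \ref{Segre}, and propagate non-extendability outward via Theorem \ref{criterion}(2). The only quibble is that the hypotheses of case 1') are needed not just to force $m-1\geq 3$ but also to ensure both factors of $\p^r\times\p^{m-r-1}$ are positive-dimensional; they do guarantee both, so the argument stands.
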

\begin{proof} In cases 2'), 3), 4) and 9) in the statement the variety $\mathcal L_{y,Y}\subset\p^{n-1}$ of one example is the variety $Y\subset\p^N$ occurring in the next one. Thus for these cases, by the second part of Theorem \ref{criterion},  it is sufficient to prove the result for case 1'). For this variety the conclusion follows from Corollary \ref{Segre}. For the remaining cases, the variety $\mathcal L_{y,Y}\subset\p^{n-1}$ is either
as in case 1') with $(r,m)=(2,5)$ or as in case 2) with $r=6$ and the conclusion follows once
again by the second part of Theorem \ref{criterion}.
\end{proof} 
\medskip

The next result is also classical and well-known but we provide a direct geometric proof. Under the assumption that the hyperplane section $H\cap X=Y$ is general, it was proved  by C. Segre for $n=2$ in \cite{CSegre} and by Scorza in \cite{Scorza1}, see also \cite{Terracini},
for arbitrary $n\geq 2$ (and also for arbitrary Veronese embeddings $\nu_d(\p^n)\subset\p^{N(d)}$, with $n\geq 2$ and $d\geq 2$; modern proofs of this general case are contained in \cite{Badescu} and in \cite{Zakdual}).
\medskip

\begin{Proposition}\label{Veronese} Let $n\geq 2$ and let $Y\subset\p^{\frac{n(n+3)}{2}}$ be
a manifold projectively equivalent to the quadratic Veronese embedding 
$\nu_2(\p^n)\subset\p^{\frac{n(n+3)}{2}}$. Then every extension of $Y$ is trivial.
\end{Proposition}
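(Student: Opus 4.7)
The strategy is to adapt the proof of Theorem \ref{criterion}: for a general $y\in Y$, produce a line $L_y\subset X$ through $y$ and through a singular point $p_y\in\Sing(X)$, then conclude via finiteness of $\Sing(X)$ and irreducibility of $Y$ that $p_y=p$ is independent of $y$, so that $X=S(p,Y)$ is a cone. Since $\nu_2$ doubles the degree of every curve, $Y=\nu_2(\p^n)$ contains no lines, hence $\mathcal L_{y,Y}=\emptyset$ and Theorem \ref{criterion} does not apply directly; the required singularity of $\mathcal L_{y,X}$ has to be extracted from the second fundamental form.

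The first input is the classical fact that the Veronese is maximally curved: the second osculating space $T_y^{(2)}Y=\langle Y\rangle=\p^N$, equivalently $|II_{y,Y}|$ is the complete linear system of quadrics on $\p((t_yY)^*)=\p^{n-1}$ and $B_{y,Y}=\emptyset$. For an extension $X\subset\p^{N+1}$, the inclusions $T_y^{(2)}Y\subseteq T_y^{(2)}X$ and $T_yX\not\subseteq H$ force $T_y^{(2)}X=\p^{N+1}$. Hence $|II_{y,X}|$ attains the maximal projective dimension $\codim(X)-1=n(n+1)/2-1$ on $\p^n=\p((t_yX)^*)$, and restricts isomorphically to the complete system $|II_{y,Y}|$ on the hyperplane $\p^{n-1}\subset\p^n$. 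In particular $B_{y,X}\cap\p^{n-1}=\emptyset$, so $B_{y,X}$ is a $0$-dimensional closed subscheme of the affine chart $\mathbb A^n=\p^n\setminus\p^{n-1}$, and by Corollary \ref{quadraticformLx}(1) the same holds for $\mathcal L_{y,X}\subseteq B_{y,X}$.

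In explicit local coordinates --- parametrizing $Y$ near $y$ as the Veronese image $(u_1,\ldots,u_n)\mapsto (u_iu_j)_{i\le j}$ and extending via $X(u,v)=Y(u)+v\,g(u)+v^2 h(u)+O(v^3)$ --- the generators of $|II_{y,X}|$ have the form $Q_{ij}(y)=y_iy_j$ plus linear and constant corrections encoded by the Taylor data of $g$ and $h$, and $B_{y,X}$ is their common zero scheme in $\mathbb A^n$. The heart of the argument is to show that this overdetermined system of $n(n+1)/2$ quadratic equations in $n$ variables always cuts out a non-reduced fat point of length $\geq n+1$. I would argue this by exploiting the rigidity afforded by the Veronese's second fundamental form $II_{y,Y}$ being an isomorphism $S^2(t_yY)\to T_y\p^N/t_yY$: combined with the constraint that $X$ be an algebraic variety (not just a formal power series), this forces the correction coefficients in the $Q_{ij}$ to be parametrized by a single tangent vector $a\in t_yY$, so that after translating the affine chart $B_{y,X}$ becomes $V(y_iy_j\mid i\le j)=\Spec k[y_1,\ldots,y_n]/(y_1,\ldots,y_n)^2$, the fat point of length $n+1$ at the origin. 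Once this is established, Proposition \ref{extlines}(4) yields a singular point of $\mathcal L_{y,X}$, hence a line $L_y\subset X$ through $y$ and through $p_y\in\Sing(X)$, and the cone conclusion then follows exactly as in the proof of Theorem \ref{criterion}. The main obstacle is precisely the rigidity statement just described, which boils down to an explicit local computation with the Veronese's extremely restrictive second fundamental form; a convenient way to package this computation is to identify the kernel of $II_{y,X}\colon S^2(t_yX)\to T_y\p^{N+1}/t_yX$, which has dimension exactly $n+1$ and splits as a graph over the complement of $S^2(t_yY)$ in $S^2(t_yX)$ whose defining $(n+1)$--tuple of quadratic forms in $t_yY$ is forced to be the symmetrization of a single vector $a\in t_yY$.
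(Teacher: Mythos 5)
Your strategy diverges completely from the paper's, and it has two genuine gaps. The paper does not touch the second fundamental form here at all: it exploits the conics covering the Veronese. Since $\mathcal L_{y,Y}=\emptyset$, there are only finitely many lines of $X$ through $y$; one then computes $N_{C/X}\simeq\O_{\p^1}(1)^{n-1}\oplus\O_{\p^1}(2)$ for a conic $C\subset Y$ through $y$, deduces an $(n+1)$-dimensional family of conics through $y$ covering $X$, applies Bend and Break to degenerate a conic through $y$ and a general $x\in X$ into a pair of lines $L_y\cup L_x$, and finally uses a dimension count on $SX$ together with the Terracini Lemma to show that the intersection point $\widetilde L_y\cap L_x$ cannot move, so it is the vertex of a cone. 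Note in particular that the paper never produces a singular point of $\mathcal L_{y,X}$; the lines through $y$ are manufactured by degenerating conics, not read off from $B_{y,X}$.

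The first gap in your argument is the direction of the inclusion in Corollary \ref{quadraticformLx}(1). You only have $\mathcal L_{y,X}\subseteq B_{y,X}$, with equality guaranteed only when $X$ is quadratic --- and nothing forces an arbitrary extension $X$ to be quadratic. So even if you succeed in showing that $B_{y,X}$ is a fat point of length $n+1$, this gives no lower bound whatsoever on $\mathcal L_{y,X}$: it could be empty, or a single reduced point, and in either case Proposition \ref{extlines}(4) produces nothing. (For the trivial cone $S(p,Y)$, which \emph{is} quadratic, $\mathcal L_{y,X}=B_{y,X}$ is indeed the fat point $V\bigl((y_1,\ldots,y_n)^2\bigr)$, so your picture is consistent with the answer; but the implication you need runs the other way.) The second gap is the step you yourself flag as ``the main obstacle'': the claim that the correction terms in the quadrics $Q_{ij}$ are parametrized by a single tangent vector, i.e.\ that the Veronese is rigid to second order. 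This is essentially the Griffiths--Harris rigidity of $\nu_2(\p^n)$, a substantive theorem in its own right (see the references to \cite{GH}, \cite{Lan3} in the paper); invoking it without proof makes the argument circular in spirit, since that rigidity statement is at least as hard as the non-extendability you are trying to prove. If you want a proof in the spirit of \S 1 of the paper, follow the conic degeneration route; the second-fundamental-form route can be made to work, but only by actually carrying out the moving-frames computation, which is a different (and longer) paper.
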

\begin{proof} Let $y\in Y$ be a general point and let $N=\frac{n(n+3)}{2}$. Since $\mathcal L_{y,Y}=\emptyset$, then $\mathcal L_{y,X}\subset\p^n$, if
not empty, consists of at most a finite number of points and through $y\in X$ there passes at most a finite number of
lines contained in $X$. Consider a conic $C\subset Y$ passing through $y$. Then $N_{C/Y}\simeq\O_{\p^1}(1)^{n-1}$. 
The exact sequence
of normal bundles
$$0\to N_{C/Y}\to N_{C/X}\to N_{Y/X|C}\simeq\O_{\p^1}(2)\to 0,$$
yields
$$N_{C/X}\simeq N_{C/Y}\oplus\O_{\p^1}(2)\simeq \O_{\p^1}(1)^{n-1}\oplus\O_{\p^1}(2).$$

Thus there exists a unique irreducible component $\mathcal C_{y,X}$ of the Hilbert scheme of conics contained in $X\subset\p^{N+1}$ passing through $y\in X$ to which  $[C]$ belongs. Moreover $\dim(\mathcal C_{y,X})=n+1$ and the conics
parametrized by $\mathcal C_{y,X}$ cover $X$. Hence there exists a one dimensional  family of conics
through $y$  and  a general point $x\in X$. By Bend and Break, see for example \cite[Proposition 3.2]{Debarre}, there is at least a singular conic through $y$ and $x$. Since $X\subset\p^{N+1}$ is not a linear space, there exists no line joining $y$ and a general $x$, i. e. the  singular  conics through $x$ and $y$ are  reduced. Thus given a general point $x$ in $X$, there
 exists a line $L_x\subset X$ through $x$, not passing through $y$,  and a line $L_y\subset X$ through $y$ such that
 $L_y\cap L_x\neq \emptyset$. Since there are a finite number of lines contained in $X$ and passing through $y$, we can conclude that given a general point $x\in X$, there exists a fixed line passing through $y$, $\widetilde{L}_y$,
 and a line $L_x$ through $x$  such that $L_x\cap  \widetilde{L}_y\neq \emptyset.$
 
Moreover, a general conic  $[C_{x,y}]\in\mathcal C_{y,X}$ and passing through a general point $x$ is irreducible, does not pass through the finite set $\Sing(X)$ and has ample normal bundle 
verifying  $h^0(N_{C_{x,y}/X}(-1))=h^0(N_{C/X}(-1))=n+1$. This means that the deformations of $C_{x,y}$ keeping $x$ fixed cover an open subset of $X$ and also that through general
points $x_1,x_2\in X$ there passes a one dimensional family of irreducible conics. The plane spanned by one of these conics contains $x_1$ and $x_2$ so that it has to vary
with the conic. Otherwise the fixed plane would be contained in $X$ and $X\subset\p^{N+1}$ would be a linearly embedded $\p^{N+1}$, which is contrary to our assumptions. In conclusion
through a general point $z\in <x_1,x_2>$ there passes at least a one dimensional family of secant lines to $X$ so that 
\begin{equation}\label{dimSX}
\dim(SX)\leq 2(n+1)-1=2n+1<N+1=\frac{n(n+3)}{2}+1,
\end{equation}
yielding $SX\subsetneq\p^{N+1}$.
 
Suppose the  point $p_x=\widetilde{L}_y\cap L_x$, for $y\in Y$ general, varies on  $\widetilde{L}_y$. Then the linear span of two general tangent spaces $T_{x_1}X$ and $T_{x_2}X$ would
contain the line $\widetilde{L}_y$. Since $T_zSX=<T_{x_1}X,T_{x_2}X>$ by the Terracini Lemma, we deduce that a general tangent space to $SX$ contains
$\widetilde{L}_y$ and a fortiori $y$. Since $SX\subsetneq\p^{N+1}$, the variety $SX\subset\p^{N+1}$ would be  a cone whose vertex, which is a linear space,
 contains $\widetilde{L}_y$ and a fortiori $y\in Y$. By the generality of $y\in Y$ we would deduce that $Y\subset\p^N$ is degenerate.

Thus $p_x=\widetilde{L}_y\cap L_x$ does not vary with $x\in X$ general. Let us denote this point by
$p$. Then clearly $X\subset\p^{N+1}$ is a cone with vertex $p$ over $Y$.
\end{proof}
\medskip

\begin{Corollary}\label{extcontact} Let $Y\subset\p^N$ be a manifold  either as in 5) above with $r\geq 3$ or as in 6)
above. Then every extension of $Y$ is trivial.
\end{Corollary}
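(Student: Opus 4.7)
The plan is to apply the second part of Theorem~\ref{criterion} in both cases, which reduces each to showing that $\mathcal L_{y,Y}\subset\p((t_yY)^*)$ admits only singular projective extensions. Reading off tables~\eqref{hermitian} and~\eqref{contact}, in case 5) with $r\geq 3$ the variety $\mathcal L_{y,Y}$ is the quadratic Veronese embedding $\nu_2(\p^{r-1})\subset\p^{\frac{(r-1)(r+2)}{2}}$ of a projective space of dimension $r-1\geq 2$, while in case 6) it is the Lagrangian Grassmannian $Sp(3)/U(3)$ in its natural equivariant embedding, which is precisely the variety $Y$ of case 5) when $r=3$. This gives the structure of a two-step iteration: settle case 5) first, then reduce case 6) to it.

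For case 5) with $r\geq 3$, Proposition~\ref{Veronese} applies verbatim to $\mathcal L_{y,Y}=\nu_2(\p^{r-1})$ with $r-1\geq 2$ and shows that every extension of this Veronese manifold is a cone, hence singular at the vertex. Theorem~\ref{criterion}(2) then forces every extension of $Y\subset\p^N$ to be trivial.

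For case 6), once the previous step is in hand, every extension of $\mathcal L_{y,Y}=Sp(3)/U(3)$ is a cone and in particular singular, so a second application of Theorem~\ref{criterion}(2) concludes. The main point demanding care is that in case 6) the embedding $\mathcal L_{y,Y}\subset\p((t_yY)^*)$ is degenerate, spanning only the subhyperplane $\p((D_y)^*)$ determined by the contact distribution, whereas the candidate extensions $\mathcal L_{y,X}$ live a priori in the strictly larger ambient $\p((t_yX)^*)$. This is the mild obstacle of the argument, but it is harmless: any such extension either has linear span sitting inside a hyperplane through $\p((D_y)^*)$, in which case it amounts to an extension of $\mathcal L_{y,Y}$ within its natural equivariant ambient and is covered by case 5) with $r=3$, or else it is a cone from a point outside $\p((D_y)^*)$, which is singular at that vertex. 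Either way the hypothesis of Theorem~\ref{criterion}(2) is met and the conclusion follows.
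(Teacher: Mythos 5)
Your argument is correct and is essentially the paper's own proof: case 5) is Proposition~\ref{Veronese} combined with the second part of Theorem~\ref{criterion}, and case 6) is a second application of Theorem~\ref{criterion}(2) using case 5) with $r=3$. The one quibble concerns your handling of the degeneracy of $\mathcal L_{y,Y}$ in case 6): the second branch of your dichotomy (``a cone from a point outside $\p((D_y)^*)$'') is asserted without justification, but it is in fact vacuous --- since the extension meets $\p((t_yY)^*)$ exactly in $\mathcal L_{y,Y}$, which spans only $\p((D_y)^*)$, a non-degenerate extension would be swept out, via the pencil of hyperplanes through $\langle\mathcal L_{y,Y}\rangle$, by divisors all equal to $\mathcal L_{y,Y}$ in degree, forcing the extension to coincide with $\mathcal L_{y,Y}$; hence the span is necessarily a hyperplane and your first branch always applies.
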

\begin{proof} By \eqref{hermitian} we know that in case 5) with $r\geq 3$ we have $n-1=\frac{(r-1)(r+2)}{2}$ and the variety $\mathcal L_{y,Y}\subset\p^{n-1}$
is projectively equivalent to $\nu_2(\p^{r-1})\subset\p^{\frac{(r-1)(r+2)}{2}}$. To conclude we apply Proposition \ref{Veronese} and the second part of Theorem \ref{criterion}. Case 6) follows
from case 5) with $r=3$ by the second part of Theorem \ref{criterion}.
\end{proof}
\medskip

\begin{Remark}\label{dualhom} There is a different and interesting approach to Corollary \ref{exthermitian} and Corollary \ref{extcontact}
based on the theory of dual varieties and proposed by Zak in \cite{Zakdual}, which also avoids direct computations
of vanishing of cohomology groups in each case. This approach is less direct and less elementary
than ours and it is based on the following facts. By a result of Kempf the dual variety of any homogeneous variety is normal, see e.g. \cite[Theorem III.1.2]{Zak}. Then in \cite[Corollary 1]{Zakdual} it is stated that a smooth variety $X\subset\p^N$ whose dual variety is normal admits only trivial
extensions. As far as we know, to establish this result one first shows that the  normality of $X^*$ implies its linear normality, which seems to follow from  some well known but not trivial results. Finally  one applies  \cite[Theorem ]{Zakdual}, which is a general criterion for admitting only trivial extension. For us Theorem \ref{criterion} is  simply another incarnation of the Principle described in the Introduction while  Corollary \ref{exthermitian} and Corollary \ref{extcontact}, surely well known to everybody, were included only to show that they are an immediate consequences of Scorza's result  in \cite{Scorza2}, a fact which seems to have been overlooked till now.

\end{Remark}

\def\bibaut#1{{\sc #1}}

\end{document}